\documentclass[11pt]{amsart}
\usepackage[left=1in,right=1in,top=1in,bottom=1in]{geometry}

\usepackage[all,cmtip]{xy}
\usepackage{amssymb}
\usepackage[english]{babel}
\usepackage{mathrsfs}

\newtheorem{theorem}{Theorem}[section]
\newtheorem{lemma}[theorem]{Lemma}

\newtheorem{proposition}[theorem]{Proposition}
\newtheorem{corollary}[theorem]{Corollary}

\theoremstyle{definition}
\newtheorem{definition}[theorem]{Definition}

\newtheorem{remark}[theorem]{Remark}

\newtheorem{question}[theorem]{Question}

\newtheorem{claim}{Claim}

\def\B{\mathcal{B}}

\def\N{\mathbb{N}}

\def\Z{\mathbb{Z}}

\def\map{{minimally almost periodic}}

\def\Cyc{\mathrm{Cyc}}
\def\grp#1{\langle{#1}\rangle}
\def\ssgp{$\mathrm{SSGP}^*$}
\def\Prm{\mathbf{P}}
\def\Z{\mathbb{Z}}
\def\Q{\mathbb{Q}}
\def\P{\mathbb{P}}
\def\F{\mathbb{F}}
\def\a{m}

\def\ssgp{\mathrm{SSGP}}
\def\SSGP{\mathtt{SSGP}}

\def \Zet[#1]{\lceil #1 \rceil}  

\def\Oplus{+}

\begin{document}
\title[SSGP topologies on abelian groups]
{SSGP 
topologies
on abelian groups of positive finite divisible rank}

\author[D. Shakhmatov]{Dmitri Shakhmatov}
\address{Division of Mathematics, Physics and Earth Sciences\\
Graduate School of Science and Engineering\\
Ehime University, Matsuyama 790-8577, Japan}
\email{dmitri.shakhmatov@ehime-u.ac.jp}
\thanks{The first listed author was partially supported by the Grant-in-Aid for Scientific Research~(C) No.~26400091 of the Japan Society for the Promotion of Science (JSPS)}

\author[V. Ya\~nez]{V\'{\i}ctor Hugo Ya\~nez}
\address{Master's Course, Graduate School of Science and Engineering\\
Ehime University, 
Matsuyama 790-8577, Japan}
\email{victor\textunderscore yanez@comunidad.unam.mx}
\thanks{This paper was written as part of the second listed author's 
studies
at the Graduate School of Science and Engineering of Ehime University. The second listed author was partially supported by the 2016 fiscal year grant of the Matsuyama Saibikai.}

\begin{abstract}
For a subset $A$ of a group $G$, we denote by $\grp{A}$ the smallest subgroup of $G$ containing $A$ and let $\Cyc(A)=\{x\in G:
\grp{x}\subseteq A\}$.
A
topological group $G$ 
is {\em SSGP\/}
if 
$\grp{\Cyc(U)}$ is dense in $G$ for every neighbourhood $U$ of zero of $G$. SSGP groups form a proper subclass of the class of minimally almost periodic groups.

Comfort and Gould asked for a characterization of abelian groups 
which admit an SSGP 
group 
topology.
An ``almost complete'' characterization
was found by Dikranjan and the first author. The remaining case is resolved here.
As a corollary, we give a positive answer to another question of Comfort and Gould by showing that 
if an abelian group admits an SSGP($n$) 
group topology for some positive integer $n$,
then
it admits an SSGP group topology as well.
\end{abstract}

\dedicatory{In memory of W. Wistar Comfort}

\subjclass[2010]{Primary: 22A05; Secondary: 03E99, 06A06, 20K27, 54E35, 54H11}

\keywords{minimal almost periodic group, small subgroup generating property, metric group topology, finite powers of the rational numbers, countable group, abelian group, partially ordered set}

\maketitle

As usual, $\Z$ and $\Q$ denote the groups of integer numbers and rational numbers respectively, 
$\N$ denotes the set of natural numbers and $\N^+=\N\setminus\{0\}$. We use $\Prm$ to denote the set of prime numbers.

Let $G$ be a group.
For subsets $A,B$ of 
$G$, we let $AB=\{ab:a\in A, b\in B\}$ and $A^{-1}=\{a^{-1}:a\in A\}$. When $G$ is abelian, we use the additive notation $A+B$ instead of $AB$ and $-A$ instead of $A^{-1}$.
For a subset $A$ of 
$G$, we
denote by $\grp{A}$ the smallest subgroup of $G$ containing $A$.
To simplify the notation, 
we write $\grp{x}$ instead of $\grp{\{x\}}$ for $x\in G$.

\section{The small subgroup generating property $\ssgp$} 

Following \cite{DS_SSGP}, we define 
\begin{equation}
\label{eq:Cyc}
\Cyc(A)=\{x\in G:
\grp{x}\subseteq A\}
\ 
\text{ for every }
A\subseteq G.
\end{equation}

\begin{definition}
\label{SSGP:original}
A topological group $G$ has the {\em small subgroup generating property\/} (abbreviated to {\em SSGP\/})
if and only if 
$\grp{\Cyc(U)}$ is dense in $G$ for every neighbourhood $U$ of the identity of 
$G$.
We shall say that a topological group $G$ is {\em SSGP\/} if $G$ satisfies the small subgroup generating property.
\end{definition}

\begin{remark}
The
small subgroup generating property was introduced by Gould \cite{G}; see also \cite{Gould, CG}. 
He 
says that a topological group $G$ has the small subgroup generating property if for every neighbourhood  $U$ of the identity of $G$, there exists  a family  $\mathcal H$ of subgroups of $G$ such that $\bigcup \mathcal H\subseteq U$ and 
$\grp{\bigcup \mathcal H}$
is dense in $G$.
It is easily seen that Gould's definition is equivalent to the one given 
in~\ref{SSGP:original}.
\end{remark}

\begin{definition}
\label{def:bounded:envelope}
For a subset $A$ of 
group 
$G$
and $k \in \N^+$,
we let
\begin{equation}
\label{eq:bounded:hull}
\langle A \rangle_k= \left \{ \prod_{i=1}^j a_i: j\le k, a_1,\dots,a_j\in A \right \}.
\end{equation}
\end{definition} 

\begin{remark}
\label{remark}
If $A$ is a subset of a group $G$ such that 
$A=A^{-1}$,
then $\grp{A} = \bigcup_{k \in \N^+} \grp{A}_k$.
\end{remark}

Using the notation from \eqref{eq:bounded:hull},
one can get 
a convenient 
reformulation of 
the $\ssgp$ 
property.

\begin{proposition}
\label{SSGP:reformulation}
For a topological group $G$, the following conditions are equivalent:
\begin{itemize}
\item[(i)]
$G$  has the small subgroup generating property $\ssgp$;
\item[(ii)]
$G = \bigcup_{k \in \N^+} W \cdot \langle \Cyc(W) \rangle_k$ for every neighbourhood $W$ of the identity 
of $G$;
\item[(iii)] for each $g\in G$ and every neighbourhood $W$ of the identity of $G$, there exist $k\in\N^+$ (depending on $g$ and $W$), $g_0\in W$  and $g_1,\dots,g_k\in G$ such that 
$g=\prod_{i=0}^k g_i$
and $\grp{g_i}\subseteq W$ for $i=1,\dots,k$.
\end{itemize}
\end{proposition}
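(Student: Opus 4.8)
The plan is to prove the equivalence of (i), (ii), and (iii) by a cyclic chain of implications, exploiting Remark~\ref{remark} to translate the density condition on $\grp{\Cyc(U)}$ into a covering condition involving the bounded hulls $\grp{\Cyc(W)}_k$. Throughout I write the group multiplicatively and denote by $\mathcal{N}$ the filter of neighbourhoods of the identity $e$ of $G$. The key observation linking density to the covering in (ii) is standard: a subset $D$ of a topological group $G$ is dense if and only if $WD = G$ for every (equivalently, every symmetric) $W \in \mathcal{N}$; indeed $g \in \overline{D}$ means every neighbourhood of $g$ meets $D$, and the neighbourhoods $g W^{-1}$ (as $W$ ranges over $\mathcal{N}$) are cofinal among neighbourhoods of $g$, so $g \in \overline{D}$ for all $g$ iff $gW^{-1} \cap D \neq \emptyset$ for all $g, W$, iff $D \cap Wg \neq \emptyset$ — and after replacing $W$ by a symmetric subneighbourhood this is exactly $G = WD$.

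For (i)$\Rightarrow$(ii): fix $W \in \mathcal{N}$ and choose a symmetric $V \in \mathcal{N}$ with $V \subseteq W$; note $\Cyc(V)$ is a symmetric set (if $\grp{x} \subseteq V$ then $\grp{x^{-1}} = \grp{x} \subseteq V$), and $\Cyc(V) \subseteq \Cyc(W)$. By (i), $D := \grp{\Cyc(V)}$ is dense, so by the density criterion $G = VD \subseteq W \cdot \grp{\Cyc(W)}$. Since $\Cyc(V) = \Cyc(V)^{-1}$, Remark~\ref{remark} gives $\grp{\Cyc(V)} = \bigcup_{k \in \N^+} \grp{\Cyc(V)}_k \subseteq \bigcup_{k \in \N^+}\grp{\Cyc(W)}_k$, whence $G = \bigcup_{k \in \N^+} W \cdot \grp{\Cyc(W)}_k$, which is (ii). The reverse implication (ii)$\Rightarrow$(i) runs along the same lines: given any $W \in \mathcal{N}$, pick symmetric $V \subseteq W$ in $\mathcal{N}$; applying (ii) to $V$ yields $G = \bigcup_k V \cdot \grp{\Cyc(V)}_k = V \cdot \grp{\Cyc(V)} \subseteq V\cdot\grp{\Cyc(W)}$ (the middle equality again by Remark~\ref{remark}, since $\Cyc(V)$ is symmetric), and then the density criterion applied to the symmetric $V$ shows $\grp{\Cyc(W)}$ is dense, giving (i).

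For the equivalence with (iii): unravelling the set-theoretic meaning of (ii), the statement ``$g \in \bigcup_{k\in\N^+} W\cdot\grp{\Cyc(W)}_k$'' says precisely that there exist $k \in \N^+$, an element $g_0 \in W$, and elements $g_1,\dots,g_k \in \grp{\Cyc(W)}_k$ — that is, each $g_i$ is a product of at most $k$ members of $\Cyc(W)$, equivalently (absorbing the ``at most $k$'' into the bound by allowing some of the $g_i$ to be $e$, and relabelling) each $g_i$ with $\grp{g_i}\subseteq W$ — such that $g = g_0 g_1 \cdots g_k$. The only mild subtlety is matching the shapes: an element of $\grp{\Cyc(W)}_k$ is a product $a_1\cdots a_j$ with $j \le k$ and $\grp{a_i}\subseteq W$, and conversely any such product lies in $\grp{\Cyc(W)}_k$; so (ii) for all $W$ is literally (iii) for all $W$, after noting that in (iii) one may replace the single constraint $g_0\in W$ and the $k$ constraints $\grp{g_i}\subseteq W$ by absorbing everything into one bounded hull, and conversely split a bounded-hull element back into individual generators. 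I expect this bookkeeping — reconciling the ``one factor from $W$ plus $k$ factors from $\grp{\Cyc(W)}_k$'' form of (ii) with the ``$k+1$ factors, the last $k$ generating subgroups inside $W$'' form of (iii), and in particular confirming that the two occurrences of $k$ can be taken equal — to be the only place requiring care; it is entirely elementary, using only that $\grp{g_i}\subseteq W \iff g_i \in \Cyc(W)$ and that a product of $j \le k$ such elements lies in $\grp{\Cyc(W)}_k$. No compactness, completeness, or topology beyond the neighbourhood-filter density criterion is needed.
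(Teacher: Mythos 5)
Your (i)$\Rightarrow$(ii) argument and the (ii)$\leftrightarrow$(iii) bookkeeping are correct and essentially the same as the paper's (the detour through a symmetric $V\subseteq W$ in (i)$\Rightarrow$(ii) is unnecessary, since $\Cyc(W)=(\Cyc(W))^{-1}$ holds for every $W$, but it is harmless). The genuine problem is in (ii)$\Rightarrow$(i). By your own density criterion, density of $D=\grp{\Cyc(W)}$ means $G=V'\cdot D$ for \emph{every} neighbourhood $V'$ of the identity; but your argument fixes the target $W$, picks a \emph{single} symmetric $V\subseteq W$, proves $G=V\cdot\grp{\Cyc(W)}$, and then concludes density ``by the density criterion applied to the symmetric $V$''. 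A covering $G=V\cdot D$ for one particular $V$ does not imply that $D$ is dense (in $\R$ one has $(-1,1)+\Z=\R$ while $\Z$ is not dense), so the step as written fails: nothing in your argument makes the translating neighbourhood shrink below the fixed $V$.

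The repair is exactly the paper's move: let $V'$ be an \emph{arbitrary} neighbourhood of the identity, apply (ii) to a neighbourhood $W'$ contained in $V'\cap W$ (the paper simply takes $W'=U\cap V'$; no symmetrization is needed), and use $\Cyc(W')\subseteq\Cyc(W)$ together with $W'\subseteq V'$ to obtain $G=\bigcup_{k\in\N^+}W'\cdot\grp{\Cyc(W')}_k\subseteq V'\cdot\grp{\Cyc(W)}$; since $V'$ was arbitrary, $\grp{\Cyc(W)}$ is dense. Your computation goes through verbatim for every such $W'$, so the missing ingredient is the quantification over all $V'$ rather than any new idea, but as stated your proof of (ii)$\Rightarrow$(i) is not valid.
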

\begin{proof}
(i)$\to$(ii)
Suppose that $G$ has the small subgroup generating property.
Let $W$ be a neighbourhood of the identity $e_G$ of $G$.
Since $\Cyc(W)=(\Cyc(W))^{-1}$, we have
$\grp{\Cyc(W)}=\bigcup_{k\in\N^+} \grp{\Cyc(W)}_k$ by Remark \ref{remark}.
Furthermore, $\grp{\Cyc(W)}$ is dense in $G$ by Definition \ref{SSGP:original}, so
$$G=W \cdot\grp{\Cyc(W)} = W\cdot \bigcup_{k\in\N^+} \grp{\Cyc(W)}_k = \bigcup_{k\in\N^+} W \cdot \grp{\Cyc(W)}_k.
$$

(ii)$\to$(i) 
Suppose that $G$ satisfies the property from item (ii) of our lemma. Fix a neighbourhood $U$ of $e_G$ in $G$.
By Definition \ref{SSGP:original}, 
to check that $G$ has the small subgroup generating property,
it suffices to prove that $\grp{\Cyc(U)}$ is dense in $G$.
This is equivalent to establishing
the equality $G=V \cdot \grp{\Cyc(U)}$ for every neighbourhood $V$ of $e_G$ in $G$. Let $V$ be an arbitrary neighbourhood of $e_G$ in $G$.
Then $W=U\cap V$ is a neighbourhood of $e_G$ in $G$, so we can apply (ii) to this $W$ to obtain 
$$G = \bigcup_{k\in\N^+} W \cdot \grp{\Cyc(W)}_k
=
W\cdot \bigcup_{k\in\N^+}  \grp{\Cyc(W)}_k
 \subseteq V \cdot \bigcup_{k\in\N^+} \grp{\Cyc(U)}_k \subseteq V \cdot \grp{\Cyc(U)}.$$
The converse inclusion $V \cdot \grp{\Cyc(U)} \subseteq G$ is clear.

(ii)$\leftrightarrow$(iii) follows from
\eqref{eq:Cyc} and \eqref{eq:bounded:hull}. 
\end{proof}

\section{Properties $\ssgp(n)$ and $\SSGP(\alpha)$} 

According to von Neumann's terminology \cite{vN}, a  topological group $G$ is called
{\em minimally almost periodic\/} if every continuous homomorphism from $G $ to a compact group is trivial. 

For every $n\in\N$, 
the property $\ssgp(n)$ 
of topological groups was
defined in \cite{G}. We refer the reader to \cite[Definition 3.3]{CG}
for the definition of $\ssgp(n)$.
It was proved in \cite[Remark 3.4, Theorem 3.5]{CG} that
the following implications hold for an arbitrary topological group:
\begin{equation}\label{SSGP}
\ssgp =  \ssgp(1) \to \ssgp(2) \to \ldots \to  \ssgp(n) \to \ldots \to  \mbox{ \map}.
\end{equation}

Examples distinguishing all 
properties
in (\ref{SSGP}) can be found in  \cite[Corollary 3.14, Theorem 4.6]{CG}.
However,
all properties
in (\ref{SSGP}) coincide for bounded torsion abelian topological groups. 

\begin{theorem}  
\cite[Corollary 2.23]{CG} \label{CoroCG2}
A bounded torsion abelian topological group has the small subgroup generating property $\ssgp$ if and only if it is \map. 
\end{theorem}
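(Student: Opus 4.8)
The plan is as follows. One direction is automatic: the implication $\ssgp\to\map$ holds for every topological group, being the composite of all the arrows in \eqref{SSGP}. So it remains to prove, for a bounded torsion abelian topological group $G$ — fix $m\in\N^+$ with $mG=\{0\}$ — that \map implies $\ssgp$, and I would do this by contraposition: assume $G$ is not $\ssgp$ and produce a nontrivial continuous homomorphism from $G$ to a compact group.

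The first step, and the place where boundedness genuinely enters, is to note that in this setting $\grp{\Cyc(U)}$ is always an \emph{open} subgroup of $G$ for every neighbourhood $U$ of $0$. Indeed, continuity of the group operations yields a neighbourhood $V$ of $0$ with $\underbrace{V+\dots+V}_{m}\subseteq U$; since $mx=0$ forces $\grp{x}=\{kx:0\le k<m\}$ for every $x\in G$, each $x\in V$ satisfies $\grp{x}\subseteq\underbrace{V+\dots+V}_{m}\subseteq U$ (padding with $0\in V$ when necessary), that is, $V\subseteq\Cyc(U)$. Hence $\grp{\Cyc(U)}$ contains the neighbourhood $V$ of $0$ and is therefore open, hence also closed. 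Consequently ``$\grp{\Cyc(U)}$ is dense in $G$'' is equivalent to ``$\grp{\Cyc(U)}=G$'', and the failure of $\ssgp$ for $G$ yields a neighbourhood $U$ of $0$ for which $N:=\grp{\Cyc(U)}$ is a \emph{proper} open subgroup of $G$.

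The second step is to turn this proper open subgroup into a nontrivial continuous character of $G$. Since $N$ is open, $G/N$ is discrete, nontrivial, and of exponent dividing $m$; pick a nonzero $a\in G/N$ and let $d>1$ be its order, so $d\mid m$. Sending $a$ to $\frac1d+\Z\in\T=\R/\Z$ defines a nonzero homomorphism of $\grp{a}$ into $\T$; as $\T$ is divisible, hence an injective abelian group, this homomorphism extends to a nonzero homomorphism $\varphi\colon G/N\to\T$, which is automatically continuous because $G/N$ is discrete. Then $\varphi\circ q\colon G\to\T$, where $q\colon G\to G/N$ is the quotient map, is a nontrivial continuous character of $G$; since $\T$ is compact, $G$ is not \map. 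This completes the argument by contraposition.

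I expect the only real subtlety to be the openness of $\grp{\Cyc(U)}$ established in the first step: this is precisely the feature of bounded torsion groups that trivialises the otherwise delicate density requirement in the definition of $\ssgp$, collapsing it to the purely algebraic condition $\grp{\Cyc(U)}=G$, after which the injectivity of $\T$ does the rest. For groups not of bounded torsion no such reduction is available, which is consistent with the strictness of the implications in \eqref{SSGP} in general.
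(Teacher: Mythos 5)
Your argument is correct. Note, however, that the paper does not prove this statement at all: Theorem \ref{CoroCG2} is simply quoted from \cite[Corollary 2.23]{CG}, so there is no in-paper proof to compare against; what you have produced is a self-contained replacement for that citation. Your two steps are exactly the right ones, and both check out. The easy direction $\ssgp\to{}$\map{} is the chain \eqref{SSGP} (itself cited from \cite{CG}, so if you want full self-containment you would still need to argue $\ssgp\to{}$\map{} directly, which is routine: a nontrivial continuous homomorphism to a compact group has a neighbourhood of the identity in the target containing no nontrivial subgroup, and pulling it back kills density of $\grp{\Cyc(U)}$). For the converse, the observation that bounded exponent $m$ makes $\grp{\Cyc(U)}$ open — via $V$ with $\underbrace{V+\dots+V}_{m}\subseteq U$, so that $V\subseteq\Cyc(U)$ — is precisely where boundedness enters, and it correctly collapses density to equality $\grp{\Cyc(U)}=G$; the passage from a proper open subgroup $N$ to a nontrivial continuous character of $G$ via the discrete quotient $G/N$ and the divisibility (injectivity) of $\T$ is standard and complete, and it suffices for refuting \map{} because $\T$ is compact, so no appeal to any duality theory is needed. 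The one virtue of your route worth recording is exactly the point you flag: it isolates openness of $\grp{\Cyc(U)}$ as the feature special to bounded torsion groups, which explains why the implications in \eqref{SSGP} can collapse there while remaining strict in general.
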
  

For every ordinal $\alpha$, the property $\SSGP(\alpha)$ of topological groups was 
defined in 
\cite[Definition 2.3]{DS_SSGP}.
The reader is alerted to a subtle difference in fonts used for distinguishing properties $\ssgp(n)$ and $\SSGP(n)$, for $n\in\N$.
The connection between these properties is given in items (i), (iv) and (v) of the following theorem.
\begin{theorem}
\label{SSGP:implications}
\begin{itemize}
\item[(i)]
$\SSGP(1)$ coincides with $\ssgp$
(which is equivalent to $\ssgp(1)$ by \eqref{SSGP});
\item[(ii)] $\SSGP(\alpha)\to\SSGP(\beta)$ whenever $\alpha,\beta$ are ordinals satisfying $\alpha<\beta$;
\item[(iii)] 
for each ordinal $\alpha$, 
$\SSGP(\alpha) \to\mbox{\map}$;
\item[(iv)]
$\ssgp(n)\to \SSGP(n)$ for every $n\in\N^+$;
\item[(v)]
for every 
$n\in \N^+$,
an abelian
 topological group has the property $\ssgp(n)$ 
if and only if 
it has the property
$\SSGP(n)$.
\end{itemize}
\end{theorem}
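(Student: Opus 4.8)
The plan is to verify the five items directly from the definitions, comparing Comfort--Gould's $\ssgp(n)$ (\cite[Definition~3.3]{CG}) with Dikranjan--Shakhmatov's $\SSGP(\alpha)$ (\cite[Definition~2.3]{DS_SSGP}). Both are obtained by iterating --- transfinitely in the case of $\SSGP(\alpha)$ --- the basic SSGP operation ``replace a closed subgroup $N$ by the closure of the subgroup generated by $N$ together with the cyclic subgroups that lie inside $U$ modulo $N$'', with unions taken at limit stages; this produces, for each neighbourhood $U$ of the identity, a non-decreasing chain of closed subgroups $N_\beta(G,U)$, and $G$ has $\SSGP(\alpha)$ exactly when $N_\alpha(G,U)$ is dense in $G$ for every such $U$.

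Item (i) is then a matter of reading the definition at $\alpha = 1$: the chain is $N_0(G,U) = \{e_G\} \subseteq N_1(G,U) = \overline{\grp{\Cyc(U)}}$, so $\SSGP(1)$ says precisely that $\grp{\Cyc(U)}$ is dense for every neighbourhood $U$ of the identity, which is Definition~\ref{SSGP:original}; the identification with $\ssgp(1)$ is recorded in \eqref{SSGP}. Item (ii) is immediate from monotonicity: if $N_\alpha(G,U)$ is dense, then so is the larger subgroup $N_\beta(G,U)$ whenever $\alpha < \beta$. For item (iii) I would argue by transfinite induction on $\alpha$. Let $\varphi \colon G \to K$ be a continuous homomorphism into a compact group $K$. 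Since $K$ embeds as a closed subgroup of a product of compact Lie groups and $\varphi$ is trivial as soon as each composition with a coordinate projection is, we may assume $K$ is a compact Lie group; fix a neighbourhood $V$ of the identity of $K$ containing no nontrivial subgroup and put $U = \varphi^{-1}(V)$. One checks by induction on $\beta \le \alpha$ that $N_\beta(G,U) \subseteq \ker\varphi$: the base case is clear; at a successor stage, every cyclic subgroup adjoined by the construction has its $\varphi$-image contained in $V$ (because $N_\beta(G,U) \subseteq \ker\varphi$), hence trivial, so the new generators lie in $\ker\varphi$, and since $\ker\varphi$ is a closed subgroup it absorbs the closure and, at limit stages, the union. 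As $G$ has $\SSGP(\alpha)$, the subgroup $N_\alpha(G,U)$ is dense, so $\ker\varphi = G$ and $\varphi$ is trivial. (Items (ii) and (iii) could alternatively be quoted from \cite{DS_SSGP}.)

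The real content is in items (iv) and (v), where the two shapes of the recursion must be reconciled: Comfort--Gould organise $\ssgp(n)$ by ``quotienting out and descending'' --- roughly, $G$ has $\ssgp(n)$ iff $G/\overline{\grp{\Cyc(U)}}$ has $\ssgp(n-1)$ for every neighbourhood $U$ of the identity, with the trivial group having $\ssgp(0)$ --- whereas $\SSGP(n)$ refers to the chain $N_\bullet(G,U)$ that lives inside $G$. For (iv) I would induct on $n$: assuming $G$ has $\ssgp(n)$, apply the inductive hypothesis to each quotient $Q = G/\overline{\grp{\Cyc(U)}}$ to obtain $\SSGP(n-1)$ for $Q$, and transport a witnessing chain in $Q$ back along the quotient map $q \colon G \to Q$ --- preimages of dense subgroups are dense, and the preimage of an $N_\bullet(Q, q(U))$-chain, placed on top of $\overline{\grp{\Cyc(U)}}$, is (up to passing to closures) an $N_\bullet(G,U)$-chain --- thereby witnessing $\SSGP(n)$ for $G$. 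For (v) one needs the reverse implication when $G$ is abelian; here the role of commutativity is that the subgroups $\overline{\grp{\Cyc(U)}}$, and more generally the $N_\beta(G,U)$, are automatically normal, so the quotients appearing in the Comfort--Gould recursion are all defined and a $\SSGP(n)$-chain for $G$ descends, stage by stage, to $\SSGP(n-1)$-data --- equivalently, by the inductive hypothesis, $\ssgp(n-1)$-data --- for the successive quotients, yielding $\ssgp(n)$ for $G$.

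I expect the main obstacle to be precisely the bookkeeping in (iv) and (v): one must check that ``$\overline{\grp{\Cyc(U)}}$ computed in $G$'' and the analogous subgroups computed after passing to a quotient fit together consistently along the whole chain, so that the ``iterate inside $G$'' recursion and the ``iterate through quotients'' recursion really are conjugate, and it is at this matching step --- where the quotient $G/\overline{\grp{\Cyc(U)}}$ has to be formed in order to proceed --- that commutativity of $G$ is genuinely used in (v). Everything else reduces to routine facts: $\Cyc(\cdot)$ is inclusion-monotone, a quotient map sends $\grp{\Cyc(U)}$ into $\grp{\Cyc(q(U))}$, and preimages and closures behave well with respect to dense subgroups.
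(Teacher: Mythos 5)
The paper does not prove this theorem at all in the sense you attempt: its proof is a list of citations, quoting item (i) from \cite[Lemma 2.2 and Eq.~(7)]{DS_SSGP}, (ii) from \cite[Proposition 5.1]{DS_SSGP}, (iii) from \cite[Proposition 5.3(iii)]{DS_SSGP}, (iv) from \cite[Corollary 6.3]{DS_SSGP} and (v) from \cite[Theorem 6.4]{DS_SSGP}. Your plan to reprove everything from the definitions is therefore a different route, but as written it has a genuine gap: the definitions you argue from are not the ones in the paper (which never states \cite[Definition 3.3]{CG} or \cite[Definition 2.3]{DS_SSGP}) but your own reconstruction of them --- a transfinite chain $N_\beta(G,U)$ inside $G$ for $\SSGP(\alpha)$, and a ``quotient by $\overline{\grp{\Cyc(U)}}$ and descend'' recursion for $\ssgp(n)$. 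Every step of your argument, including the otherwise reasonable sketches of (i)--(iii), is conditional on these guesses actually matching the definitions in the literature (for instance, in the non-abelian setting the quotient recursion needs the closed \emph{normal} subgroup generated by $\Cyc(U)$, a point your sketch does not address), so the proposal cannot be checked against anything in the paper.

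More importantly, the substantive content of the theorem is in items (iv) and (v), which in \cite{DS_SSGP} are a corollary and a theorem requiring real work, and these are exactly the places where your proposal stops at a sketch: ``transport a witnessing chain back along the quotient map, up to passing to closures'' and ``descends, stage by stage'' are precisely the steps that would have to be carried out, and you yourself identify the matching of the ``iterate inside $G$'' recursion with the ``iterate through quotients'' recursion as the main obstacle without resolving it. In particular the implication $\SSGP(n)\to\ssgp(n)$ for abelian groups --- the direction of (v) not covered by (iv), and the one where commutativity genuinely enters --- is asserted in one sentence with no argument. So either the theorem should simply be quoted from \cite{DS_SSGP}, as the paper does, or the two definitions must be stated precisely and the inductive correspondence between the two recursions proved in detail; the proposal does neither.
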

\begin{proof}
Item (i) is \cite[Lemma 2.2 and Equation 7]{DS_SSGP},
item (ii) is \cite[Proposition 5.1]{DS_SSGP},  item (iii) is
\cite[Proposition 5.3~(iii)]{DS_SSGP},
item (iv) is
\cite[Corollary 6.3]{DS_SSGP}
and
item (v) is
\cite[Theorem 6.4]{DS_SSGP}.
\end{proof}

It is unknown if the properties $\ssgp(n)$ and $\SSGP(n)$
coincide for arbitrary topological groups \cite[Question 13.4]{DS_SSGP}.

\section{Introduction}

\begin{definition}
We shall say that a topology $\mathscr{T}$ on a group $G$ is an {\em SSGP topology\/} provided that $(G,\mathscr{T})$ is a topological group with the property $\ssgp$. Similar terminology will be used for properties $\ssgp(n)$ for $n\in\N^+$ and $\SSGP(\alpha)$ for an ordinal $\alpha$.
\end{definition}

A characterization of abelian groups admitting a minimal almost periodic group topology is obtained in \cite{DS_MinAP}.

Comfort and Gould \cite[Questions 5.2 and 5.3]{CG} asked the following question.

\begin{question}\label{ques1} 
\begin{itemize}
 \item[(a)] 
 What are the (abelian) groups 
 which admit an $\ssgp$ topology?
 \item[(b)] 
 Does every abelian group 
which for some $n > 1$ admits an $\ssgp(n)$ topology 
 also admit an $\ssgp$ topology?
\end{itemize}
\end{question}

Following \cite{Fuchs}, for an abelian group $G$, we denote by $r_0(G)$ the free rank of $G$, by $r_p(G)$ the $p$-rank of $G$ for a prime number $p$, and we let 
\begin{equation*}
r(G) = \max\left\{r_0(G),\ \sum\{r_p(G):p\in \Prm \}\right\},
\end{equation*}
where $\Prm$ denotes the set of prime numbers.

\begin{definition}\label{divisible:rank} 
\cite[Definition 7.2]{DS_ConnectedMarkov}  
For an abelian group $G$, 
the cardinal
\begin{equation}
\label{r_d(G)}
r_d(G)=\min\{r(nG):n\in\N^+\}
\end{equation}
is called
the {\em divisible rank\/} of $G$. 
\end{definition}

The notion of the divisible rank was defined, under the name of {\em final rank\/}, by Szele \cite{S} for $p$-groups. 

\begin{remark}
\label{rem:rank:0}
An
abelian group $G$ satisfies $r_d(G)=0$ if and only if $G$ is a bounded torsion group;  that is, if $nG=\{0\}$ for some $n\in\N^+$.
\end{remark}

In view of Remark \ref{rem:rank:0}, 
the next theorem coincides with 
\cite[Corollary 1.7]{DS_SSGP}.

\begin{theorem}
\label{bounded:case}
A non-trivial abelian group $G$ satisfying $r_d(G)=0$ admits an $\ssgp$ 
topology if and only if all leading Ulm-Kaplanski invariants of $G$ are infinite. 
\end{theorem}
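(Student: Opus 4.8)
The statement is an "if and only if" about a bounded torsion abelian group $G$ (i.e. $r_d(G)=0$, by Remark~\ref{rem:rank:0}) admitting an $\ssgp$ topology. The plan is to reduce to the already-available characterization of \map\ topologies on bounded torsion abelian groups. Indeed, by Theorem~\ref{CoroCG2}, on a bounded torsion abelian topological group the $\ssgp$ property and the \map\ property coincide; hence $G$ admits an $\ssgp$ topology if and only if $G$ admits a \map\ topology. So the whole theorem will follow once we know the characterization of bounded torsion abelian groups admitting a \map\ topology, and this is precisely the content of the main result of~\cite{DS_MinAP}: such a group admits a \map\ topology if and only if all its leading Ulm--Kaplanski invariants are infinite. (Recall that for a bounded torsion abelian group $G$ with $nG=\{0\}$ one writes $G=\bigoplus_{p}\bigoplus_{k}(\Z/p^k\Z)^{(\alpha_{p,k})}$; the \emph{leading} Ulm--Kaplanski invariants are the cardinals $\alpha_{p,k}$ for those pairs $(p,k)$ with $k$ maximal among the exponents actually occurring for the prime $p$, equivalently the $p$-ranks $r_p\big(n_pG\big)$ where $p^{n_p}$ is the exponent of the $p$-primary component.) Thus the proof is essentially a two-line citation chain: combine Theorem~\ref{CoroCG2} with \cite[the main theorem]{DS_MinAP}.

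If instead a self-contained argument is wanted, I would prove the two directions separately. For \textbf{necessity}, suppose $\mathscr T$ is an $\ssgp$ topology on $G$ and some leading invariant, say $\alpha_{p,k}$ with $p^k$ the exponent of the $p$-component $G_p$, is finite. Then $p^{k-1}G_p$ is a \emph{finite} subgroup of the form $(\Z/p\Z)^{\alpha_{p,k}}$ (after collecting ranks), and more to the point $G$ has a finite quotient onto which one can map continuously and nontrivially, contradicting minimal almost periodicity (implied by $\ssgp$ via \eqref{SSGP} / Theorem~\ref{SSGP:implications}(iii)): concretely, the subgroup $H=p^{k-1}G$ is a nonzero bounded subgroup with $\bigcap_n nG \cap H$ behaviour forcing a nontrivial continuous character. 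The clean way: a bounded torsion group with some finite leading invariant has a nonzero bounded \emph{direct summand} $B$ of exponent $p$ and finite rank that is "topologically isolated" enough to yield a continuous surjection $G\to B\to \Z/p\Z$; such a character is a continuous homomorphism into a (finite, hence compact) group, so $\mathscr T$ is not \map, a contradiction. For \textbf{sufficiency}, assuming all leading invariants infinite, one must construct an $\ssgp$ topology. By Theorem~\ref{CoroCG2} it suffices to construct a \map\ topology, and for that I would decompose $G=\bigoplus_p G_p$ and treat each primary component $G_p$ (of some exponent $p^{k_p}$) separately, equipping $G$ with the product topology: on each $G_p$, since the leading invariant is infinite, $G_p$ contains a direct summand isomorphic to $(\Z/p^{k_p}\Z)^{(\omega)}$, and one builds on this copy a \map\ (indeed $\ssgp$) topology using a standard construction — e.g. realizing $G_p$ as a dense minimally almost periodic subgroup of a suitable power of $\Z/p^{k_p}\Z$, or building a sequence of "small" finite subgroups whose union generates a dense subgroup — then extend off the summand. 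Showing the resulting group topology is Hausdorff and that finitely many small subgroups suffice (the quantitative $\langle\Cyc(W)\rangle_k$ condition of Proposition~\ref{SSGP:reformulation}) is where the work lies.

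The \textbf{main obstacle}, in the self-contained approach, is the sufficiency direction: producing an $\ssgp$ (equivalently \map) group topology on an arbitrary bounded torsion abelian group with all leading invariants infinite, including the non-finitely-generated and uncountable cases, and verifying the bounded generation condition. However, since \cite{DS_MinAP} already characterizes bounded torsion abelian groups admitting a \map\ topology, and Theorem~\ref{CoroCG2} identifies \map\ with $\ssgp$ in this class, I expect the paper simply invokes these two results; in that case there is no real obstacle and Theorem~\ref{bounded:case} is a formal corollary, exactly as the sentence preceding its statement ("the next theorem coincides with \cite[Corollary 1.7]{DS_SSGP}") indicates. I would therefore present the proof as: "This is \cite[Corollary 1.7]{DS_SSGP}; alternatively, combine Theorem~\ref{CoroCG2} with the characterization of \map\ bounded torsion abelian groups in~\cite{DS_MinAP}." and say no more.
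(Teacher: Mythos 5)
Your proposal matches the paper exactly: the paper gives no independent proof of Theorem~\ref{bounded:case}, but simply identifies it (via Remark~\ref{rem:rank:0}) with \cite[Corollary 1.7]{DS_SSGP}, which is precisely the citation you settle on, and your alternative route through Theorem~\ref{CoroCG2} together with the characterization of minimally almost periodic bounded torsion abelian groups from \cite{DS_MinAP} is the standard way that corollary is obtained. Your closing recommendation to present it as a formal corollary of those references is therefore correct and in the same spirit as the paper.
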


Theorem \ref{bounded:case} resolves Question \ref{ques1}~(a)  for abelian groups of divisible rank zero,
while the next theorem resolves it 
for abelian groups of infinite divisible rank.
\begin{theorem}
\label{infinite:rank}
\cite[Theorem 3.2]{DS_SSGP}
Every abelian group $G$ satisfying $r_d(G)\ge\omega$
  admits an $\ssgp$ 
topology.
\end{theorem}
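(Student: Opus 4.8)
The plan is to construct an explicit metrizable group topology on $G$ and verify, via Proposition~\ref{SSGP:reformulation}, that it is $\ssgp$. I would fix a decreasing sequence $W_0\supseteq W_1\supseteq\cdots$ of subsets of $G$ forming a neighbourhood base at $0$ for a Hausdorff group topology, and arrange that for every $m$ there is a subset $S_m\subseteq\Cyc(W_m)$ with $\grp{S_m}$ dense in $G$. Then $\grp{\Cyc(W_m)}\supseteq\grp{S_m}$ is dense for every $m$, so $G$ is $\ssgp$ by Definition~\ref{SSGP:original} (equivalently, in Proposition~\ref{SSGP:reformulation}(iii) one takes $g_0=0\in W_m$ and $g_1,\dots,g_k\in S_m$). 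The topology so produced is automatically \map\ by Theorem~\ref{SSGP:implications}(iii), as it must be.

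To feed this scheme, observe that $r_d(G)\ge\omega$ means, by \eqref{r_d(G)}, that $r(n!\,G)\ge\omega$ for every $n\in\N^+$, so $n!\,G$ contains an infinite independent subset for each $n$. Using the structure theory of abelian groups I would select a \emph{coherent grid} $\{x_{n,i}:n\in\N^+,\ i\in I\}$ with $x_{n,i}\in n!\,G$, with the $i$-indexed columns chosen compatibly (so that, say, $x_{n,i}\in\grp{x_{n+1,i}}$, i.e.\ each row refines the previous one), and with the property that for every $m$ the ``tail'' $\{x_{n,i}:n\ge m,\ i\in I\}$ generates a dense subgroup of $G$. Each $W_m$ is then defined as a bounded hull, in the sense of \eqref{eq:bounded:hull}, of the union $\bigcup\{\grp{x_{n,i}}:n\ge m,\ i\in I\}$ of whole cyclic subgroups, with the ``lengths'' chosen so that $W_{m+1}+W_{m+1}\subseteq W_m$: coherence of the grid is exactly what absorbs the doubling of length, since a product of two elements coming from row $n+1$ can be re-read as a shorter product of elements coming from row $n$. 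Because every $x_{n,i}$ with $n\ge m$ lies in $m!\,G$, we get $W_{m'}\subseteq m!\,G$ for all $m'\ge m$; this forces $\bigcap_m W_m\subseteq\bigcap_m m!\,G$, and it also makes the quotient topology on $G/m!\,G$ indiscrete, so each $m!\,G$ is dense and hence $S_m:=\{x_{n,i}:n\ge m,\ i\in I\}\subseteq\Cyc(W_m)$ generates a dense subgroup. The divisible part $\mathbf d(G)$ needs separate treatment, since there $m!\,\mathbf d(G)=\mathbf d(G)$ and the above filtration is useless; but $\mathbf d(G)$ is either trivial or infinite (hence of divisible rank $\ge\omega$ itself), and on it one runs the analogous construction using its own infinite independent families, gluing the two pieces so that the combined topology is still Hausdorff. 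One may, if convenient, first pass to a countable subgroup $H\le G$ with $r_d(H)\ge\omega$ (chosen together with witnesses for the required divisibilities), run the construction there, and extend the topology to $G$ keeping $H$ dense, whereupon density of $\grp{\Cyc(W_m)}$ in $H$ promotes to density in $G$.

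The main difficulty is reconciling the three demands placed on $\{W_m\}$ simultaneously: density of $\grp{\Cyc(W_m)}$ forces $W_m$ to be large (it must swallow infinitely many full cyclic subgroups that jointly generate a dense subgroup), Hausdorffness, $\bigcap_m W_m=\{0\}$, forces $W_m$ to be small, and the group-topology axiom $W_{m+1}+W_{m+1}\subseteq W_m$ caps the rate at which the $W_m$ may shrink. This is precisely where the finiteness phenomena behind the bounded-torsion and small-invariant exceptions would block the construction; they are defeated here because $r_d(G)\ge\omega$ guarantees, at every level $n$, a fresh infinite independent family inside $n!\,G$ on which to base the next neighbourhood, so there is always enough room to keep $W_m$ small while still generating densely, and divisibility of the chosen generators is what makes both the telescoping $W_{m+1}+W_{m+1}\subseteq W_m$ and the Hausdorffness $\bigcap_m W_m=\{0\}$ go through. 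The remaining points — checking the neighbourhood-base axioms for a group topology, verifying Hausdorffness, handling the divisible summand, and (if one reduced to the countable case) extending the topology with a dense subgroup — are routine once the coherent grid is in place, and the conclusion is then read off from Proposition~\ref{SSGP:reformulation}.
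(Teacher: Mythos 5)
There is a genuine gap — in fact two — and they are fatal to the scheme as you describe it. (Note also that this paper does not prove the statement at all: it is quoted from \cite[Theorem 3.2]{DS_SSGP}, so your attempt can only be judged on its own merits.) First, the ``coherent grid'' you want to select need not exist. Take $G=\bigoplus_{n\in\N}\Z$, the free abelian group of countably infinite rank, which satisfies $r_d(G)=\omega$ and so is covered by the theorem. Your coherence condition $x_{n,i}\in\grp{x_{n+1,i}}$ iterates to give $x_{m,i}\in\grp{x_{n,i}}\subseteq n!\,G$ for every $n\ge m$, hence $x_{m,i}\in\bigcap_{n}n!\,G=\{0\}$ in this $G$. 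Thus every column of the grid is forced to be zero, and there is nothing to generate with; the device of ``each row refines the previous one'' is only available in groups with many elements of infinite height (morally, divisible-like groups), which is precisely not the general case here.

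Second, even where such a grid exists, your density argument is inverted. Since every generator placed into $W_m$ lies in $m!\,G$, you get $W_{m'}\subseteq m!\,G$ for all $m'\ge m$; but a subgroup containing a neighbourhood of $0$ is \emph{open}, hence closed, so $G/m!\,G$ carries the \emph{discrete} quotient topology, not the indiscrete one as you claim. Consequently $\grp{\Cyc(W_m)}\subseteq\grp{W_m}\subseteq m!\,G$, a proper closed subgroup whenever $G$ is not divisible, so it cannot be dense and the resulting topology is automatically not $\ssgp$ (again $\bigoplus_{n\in\N}\Z$ is a counterexample). This is the crux of the whole problem: the cyclic subgroups swallowed by a small neighbourhood must nevertheless generate a subgroup meeting every coset of every open set, so they cannot all be confined to a fixed proper subgroup such as $m!\,G$; any correct construction (Gould's, the one in \cite{DS_SSGP}, or the poset construction of this paper) is organized exactly around overcoming this tension, and it is not a ``routine'' verification. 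The Hausdorffness claim has the same defect in milder form: $\bigcap_m m!\,G$ need not be $\{0\}$ (divisible part, first Ulm subgroup), and the proposed ``gluing'' with the divisible summand is not carried out, so $\bigcap_m W_m=\{0\}$ is not established either.
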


In \cite{DS_SSGP}, Dikranjan and the first author reduced 
the remaining case $0< r_d(G)<\omega$
to a
question regarding the existence of $\ssgp$ topologies on abelian groups of a very
particular type. 
More precisely, assuming a positive answer to their question,
formulated in \cite{DS_SSGP} as Question 13.1, they established in \cite[Theorem 13.2]{DS_SSGP} what is stated
below as Theorem \ref{unconditional:corollary}, thus giving a ({\em provisional\/}, at that moment) complete characterization of
abelian groups $G$ admitting an $\ssgp$ topology in the remaining open case $0< r_d(G)<\omega$.

\section{Results}

The main goal of the paper is to provide a positive answer to
(a more general version of) \cite[Question 13.1]{DS_SSGP} 
given in
Theorem \ref{the:theorem} below. 
It follows from this theorem 
that 
\cite[Theorem 13.2]{DS_SSGP}
holds {\em unconditionally\/}; that is, the following result holds:

\begin{theorem}
\label{unconditional:corollary}
For an abelian group $G$ satisfying $1\le r_d(G)<\omega$,
the following conditions are equivalent:
\begin{itemize}
\item[(i)] $G$ admits an $\ssgp$ topology;
\item[(ii)] $G$ admits an $\SSGP(\alpha)$ topology for some ordinal $\alpha$;
\item[(iii)] the quotient $H=G/t(G)$ of $G$ with respect to its torsion part $t(G)$ has finite rank $r_0(H)$ and $r(H/A)=\omega$ for some (equivalently, every) free 
subgroup $A$ of $H$
such that $H/A$ is torsion.  
\end{itemize}
\end{theorem}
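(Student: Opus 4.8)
The plan is to deduce Theorem \ref{unconditional:corollary} from Theorem \ref{the:theorem} (the positive answer to \cite[Question 13.1]{DS_SSGP}) by reconstructing the argument of \cite[Theorem 13.2]{DS_SSGP}, which was conditional on that question. Since the excerpt cuts off before the statement of Theorem \ref{the:theorem}, I will treat it as a black box asserting exactly what \cite[Question 13.1]{DS_SSGP} asked: that abelian groups of the ``particular type'' arising in the reduction of the case $0<r_d(G)<\omega$ admit an $\ssgp$ topology under the appropriate rank hypothesis. The proof of \ref{unconditional:corollary} is then essentially a citation-plus-glue argument: (iii)$\to$(i) is the content of \cite[Theorem 13.2]{DS_SSGP} once its hypothesis is discharged by Theorem \ref{the:theorem}; (i)$\to$(ii) is trivial from Theorem \ref{SSGP:implications}(i), since an $\ssgp$ topology is an $\SSGP(1)$ topology; and (ii)$\to$(iii) is the necessity direction, which is \emph{already unconditional} in \cite{DS_SSGP} and does not depend on Question 13.1 at all.

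First I would record the easy implication (i)$\to$(ii): if $G$ admits an $\ssgp$ topology then by Theorem \ref{SSGP:implications}(i) this is an $\SSGP(1)$ topology, so $G$ admits an $\SSGP(\alpha)$ topology with $\alpha=1$. Next I would handle (ii)$\to$(iii). Here one uses that $\SSGP(\alpha)\to\mbox{\map}$ (Theorem \ref{SSGP:implications}(iii)), so $G$ admits a minimally almost periodic group topology; then the characterization of abelian groups admitting a \map{} topology from \cite{DS_MinAP}, combined with the structure-theoretic analysis in \cite{DS_SSGP} specific to the range $1\le r_d(G)<\omega$, forces the conditions in (iii) on $H=G/t(G)$. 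The point to emphasize is that this direction was proved in \cite{DS_SSGP} without assuming a positive answer to Question 13.1, so nothing new is needed; I would simply cite the relevant statements (the necessity half of \cite[Theorem 13.2]{DS_SSGP}) and the equivalence of the ``some'' and ``every'' formulations in (iii), the latter being a routine check that $r(H/A)$ does not depend on the choice of free subgroup $A$ with $H/A$ torsion (any two such are commensurable, and passing to a finite-index subgroup changes neither finiteness nor the value $\omega$ of the rank).

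The substantive implication is (iii)$\to$(i). The plan is to follow the reduction already carried out in \cite{DS_SSGP}: under hypothesis (iii), one decomposes $G$ (up to the constructions permitted there — extensions by bounded torsion pieces, passing to $H=G/t(G)$, etc.) so that producing an $\ssgp$ topology on $G$ reduces to producing one on a group of the special type featuring in \cite[Question 13.1]{DS_SSGP}, namely a countable torsion-free abelian group that is (a subgroup of) a finite power of $\Q$ with a prescribed quotient structure — this is why the paper's keywords mention ``finite powers of the rational numbers'' and ``partially ordered set''. Applying Theorem \ref{the:theorem} supplies the required $\ssgp$ topology on that special group, and then lifting it back through the reduction (using that $\ssgp$ is preserved by the relevant operations, as established in \cite{DS_SSGP}, e.g. taking appropriate extensions and using Theorem \ref{CoroCG2} to absorb bounded torsion summands) yields an $\ssgp$ topology on $G$.

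The main obstacle is purely expository rather than mathematical: one must verify that the hypothesis of \cite[Theorem 13.2]{DS_SSGP} is \emph{exactly} what Theorem \ref{the:theorem} delivers, i.e. that the ``more general version'' of Question 13.1 proved here covers every instance invoked in the conditional proof. Concretely, I expect the delicate point to be matching the rank bookkeeping: the reduction in \cite{DS_SSGP} produces a specific torsion-free group whose relevant invariant (the rank of a quotient by a free subgroup) equals $\omega$ precisely because of condition (iii), and one needs Theorem \ref{the:theorem} to apply under that exact numerical condition. Once that alignment is confirmed, the proof is a short assembly of: (i)$\to$(ii) from \ref{SSGP:implications}(i); (ii)$\to$(iii) by citing the unconditional necessity argument of \cite{DS_SSGP} together with \ref{SSGP:implications}(iii) and \cite{DS_MinAP}; and (iii)$\to$(i) by feeding Theorem \ref{the:theorem} into the machinery of \cite[Theorem 13.2]{DS_SSGP}.
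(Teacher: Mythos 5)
Your proposal matches the paper's own treatment: the paper gives no separate proof of Theorem \ref{unconditional:corollary}, deriving it exactly as you do by noting that Theorem \ref{the:theorem} answers (a more general form of) \cite[Question 13.1]{DS_SSGP}, so the conditional characterization \cite[Theorem 13.2]{DS_SSGP} --- whose necessity direction and reduction machinery are already unconditional there --- holds unconditionally. Your added remarks on (i)$\to$(ii), (ii)$\to$(iii), and the need to check that wide subgroups of $\Q^\a$ are precisely the groups arising in Question 13.1 are consistent with the paper (see Definition \ref{wide:subgroup:definition} and the surrounding text), so no gap remains.
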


A combination of Theorems \ref{bounded:case},
\ref{infinite:rank} and 
\ref{unconditional:corollary}
provides a complete solution to item (a) of 
Question \ref{ques1} for abelian groups.
Similarly,
the implication (iii)$\to$(i) of the next corollary
provides 
a complete solution to item (b) of the same question for abelian groups. Both items of Question \ref{ques1}  remain widely open for non-commutative groups.

\begin{corollary}
\label{main:corollary}
For an abelian group $G$, the following conditions are equivalent:
\begin{itemize}
\item[(i)] $G$ admits an $\ssgp$ topology;
\item[(ii)] $G$ admits an $\SSGP(\alpha)$ topology for some ordinal $\alpha$;
\item[(iii)] $G$ admits an $\ssgp(n)$ topology for some integer $n>1$.
\end{itemize}
\end{corollary}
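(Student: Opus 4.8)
The plan is to prove the three conditions equivalent by establishing the cycle of implications (i)$\to$(iii)$\to$(ii)$\to$(i); the divisible-rank trichotomy recorded earlier (Theorems \ref{infinite:rank} and \ref{unconditional:corollary}, together with Theorem \ref{CoroCG2}) will be needed only for the last of these.

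First, (i)$\to$(iii) is immediate: an $\ssgp$ topology on $G$ is, by the chain of implications \eqref{SSGP}, also an $\ssgp(2)$ topology, and $2>1$, so the same topology witnesses (iii). Next, (iii)$\to$(ii): if $\mathscr T$ is an $\ssgp(n)$ topology on $G$ for some integer $n>1$, then by Theorem \ref{SSGP:implications}(iv) the topology $\mathscr T$ has the property $\SSGP(n)$, so the ordinal $\alpha=n$ witnesses (ii). (Item (v) of the same theorem would serve equally well, but (iv) already suffices and does not even invoke commutativity here.)

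The implication carrying all the content is (ii)$\to$(i), and I would dispose of it by a three-way case analysis on the value of $r_d(G)$. If $r_d(G)\ge\omega$, then $G$ admits an $\ssgp$ topology by Theorem \ref{infinite:rank}, so (i) holds irrespective of (ii). If $1\le r_d(G)<\omega$, then (ii)$\to$(i) is precisely the implication (ii)$\to$(i) of Theorem \ref{unconditional:corollary} (whose conditions (i), (ii) read verbatim as those of the corollary); this is the step into which the paper's main new result, Theorem \ref{the:theorem} — the positive answer to \cite[Question 13.1]{DS_SSGP} — has been absorbed, since it is exactly what makes Theorem \ref{unconditional:corollary} hold unconditionally. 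Finally, if $r_d(G)=0$, then $G$ is bounded torsion by Remark \ref{rem:rank:0}; fixing a topology $\mathscr T$ on $G$ with the property $\SSGP(\alpha)$ for some ordinal $\alpha$, Theorem \ref{SSGP:implications}(iii) shows that $(G,\mathscr T)$ is \map, and then Theorem \ref{CoroCG2} — a bounded torsion abelian \map\ group has the property $\ssgp$ — shows that this very topology $\mathscr T$ is already an $\ssgp$ topology, so (i) holds.

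I do not expect a serious obstacle inside the corollary itself: once Theorem \ref{unconditional:corollary} is available unconditionally, what remains is a routine implication chase. The one point deserving attention is that the bounded torsion case $r_d(G)=0$ falls outside the hypotheses of Theorem \ref{unconditional:corollary} and must be treated separately — but there the argument is even shorter, because bounded torsion collapses the entire hierarchy \eqref{SSGP}, and indeed all the properties $\SSGP(\alpha)$, down onto \map\ and hence onto $\ssgp$. So the genuine difficulty is concentrated entirely in Theorem \ref{unconditional:corollary}, which we are entitled to assume.
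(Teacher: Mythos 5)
Your proposal is correct and follows essentially the same route as the paper: (i)$\to$(iii) via the chain \eqref{SSGP}, (iii)$\to$(ii) via Theorem \ref{SSGP:implications}(iv), and (ii)$\to$(i) by the same three-way case analysis on $r_d(G)$ using Remark \ref{rem:rank:0} with Theorems \ref{CoroCG2} and \ref{SSGP:implications}(iii) for $r_d(G)=0$, Theorem \ref{unconditional:corollary} for $1\le r_d(G)<\omega$, and Theorem \ref{infinite:rank} for $r_d(G)\ge\omega$.
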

\begin{proof}
The implication (i)$\to$(iii)
follows from 
\eqref{SSGP}, 
while the 
implication (iii)$\to$(ii)
follows from item (iv)
of
Theorem \ref{SSGP:implications}.

(ii)$\to$(i) Let $G$ be an abelian $\SSGP(\alpha)$ group for some ordinal $\alpha$.
We consider three cases.

\smallskip
{\em Case 1\/}.
$r_d(G)=0$. 
By Remark \ref{rem:rank:0},
 $G$ is a bounded torsion group.
Since $G$ is an $\SSGP(\alpha)$ group, it is minimally almost periodic by item (iii) of
Theorem \ref{SSGP:implications}, 
so
$G$ 
is $\ssgp$ 
by Theorem \ref{CoroCG2}. 

\smallskip
{\em Case 2\/}.
$1\le r_d(G)<\omega$. 
In this case $G$ admits an $\ssgp$ topology by the implication (ii)$\to$(i) of 
Theorem
\ref{unconditional:corollary}.

\smallskip
{\em Case 3\/}.
$r_d(G)\ge\omega$. In this case $G$ admits an $\ssgp$ topology by Theorem \ref{infinite:rank}.
\end{proof}

The paper is organized as follows.
In Section \ref{Sec:3} we prove two lemmas about the algebraic structure of subgroups $\Q_\pi^\a$ of finite powers
$\Q^\a$ of $\Q$ which are needed in the proof of the main theorem.
In Section \ref{Sec:4} we introduce the notion of a wide subgroup $G$ of $\Q^\a$
and we establish two auxiliary
lemmas about wide subgroups needed in the sequel.
The main result of the paper is 
Theorem \ref{the:theorem} which states 
that a direct sum $G\oplus H$ of a wide subgroup $G$ of $\Q^\a$
and at most countable abelian group $H$ admits 
a metric $\ssgp$ topology $\mathscr{T}$.
Let us explain 
the strategy for constructing 
this topology.
First, we 
introduce a poset $(\P,\le)$ in Section 
\ref{Sec:5}.
Next, we define a countable family $\mathscr{D}$ of dense subsets of $(\P,\le)$ and select a linearly ordered 
subset $\F$ of $(\P,\le)$ intersecting all members of the family $\mathscr{D}$; this can be done due to a folklore Lemma \ref{countable:generic:filter}. Finally, the countable base of neighbourhoods of zero for $\mathscr{T}$ is defined by means of 
elements of $\F$ in Section \ref{Sec:7}.
The family $\mathscr{D}$ is defined in \eqref{family:D}.
Section \ref{Sec:6} collects lemmas establishing density 
in $(\P,\le)$ of various sets participating in the family $\mathscr{D}$.
The ``technical heart'' of this section is Lemma \ref{ssgp:density}
which is responsible for the $\ssgp$ property of $\mathscr{T}$.

The reader familiar with Martin's Axiom undoubtedly notices 
that we are using here a ``ZFC version'' of this axiom 
when the family of dense sets is at most countable.
The choice of such an exposition was determined by the author's desire 
to replace a direct construction of $\mathscr{T}$ via an induction
(which would be totally incomprehensible) by a  ``much smoother'' forcing-type argument using a poset $(\P,\le)$ and some dense subsets of it
(which 
is much easier to follow than the direct inductive construction). We hope that, after discovering the 
technical complexity even of this ``smooth'' approach, the reader would fully agree with our judgment.

\section{The algebraic structure of subgroups $\Q_\pi^\a$ of $\Q^\a$}
\label{Sec:3}

\begin{definition}
\label{def:Q:pi}
For a non-empty 
set
$\pi$ of 
of prime numbers, we use $\Q_\pi$ to denote the 
set of all rational numbers $q$ whose irreducible representation 
$q=z/n$ with $z\in\Z$ and $n\in\N^+$ is such that 
all prime divisors of $n$ belong to $\pi$.
For convenience, we let $\Q_{\emptyset}=\{0\}$.
\end{definition}

\begin{definition}
Given an integer $s \in \Z$, we shall denote by $\Zet[s]$ the subgroup $s\Z$ of $\Q$. 
\end{definition}

A straightforward proof of the following lemma is left to the reader.

\begin{lemma}
\label{basic:subgroups:of:Q}
Let $\pi\subseteq \Prm$ and $\a\in\N^+$. Then:
\begin{itemize}
\item[(i)]
$\Q_\pi$ is a subgroup of $\Q$, so $\Q_\pi^\a$
is a subgroup of $\Q^\a$.
\item[(ii)]
$\Z\subseteq \Q_\pi$, and so 
$\Zet[s]^\a \subseteq \Q_\pi^\a$
for each $s\in \N^+$.
\item[(iii)]
If $\pi\subseteq\pi'\subseteq \Prm$, then
$\Q_\pi\subseteq \Q_{\pi'}$, and so $\Q_\pi^\a\subseteq \Q_{\pi'}^\a$.
\end{itemize}
\end{lemma}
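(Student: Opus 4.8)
The plan is to reduce all three assertions to the corresponding statements about $\Q_\pi$ itself and then lift them to the finite powers $\Q_\pi^\a$ using two trivialities: a finite Cartesian power $K^\a$ of a subgroup $K$ of an abelian group $L$ is a subgroup of $L^\a$, and $K\subseteq K'$ entails $K^\a\subseteq (K')^\a$. So I would first dispatch the ``$\a$-th power'' half of each item in one line, leaving only the claims about subsets of $\Q$.

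For (i), the case $\pi=\emptyset$ is immediate, since $\Q_\emptyset=\{0\}$ is visibly a subgroup of $\Q$; so assume $\pi\neq\emptyset$. Then $0=0/1\in\Q_\pi$, and if $q=z/n$ (in irreducible form) lies in $\Q_\pi$ then $-q=(-z)/n$ has the same denominator and hence also lies in $\Q_\pi$, so it remains to check closure under addition. Given $q=z/n$ and $q'=z'/n'$ in irreducible form with every prime divisor of $n$ and of $n'$ in $\pi$, write $q+q'=(zn'+z'n)/(nn')$; the denominator of the \emph{irreducible} representation of $q+q'$ divides $nn'$, so each of its prime divisors divides $n$ or $n'$ and therefore lies in $\pi$. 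Thus $q+q'\in\Q_\pi$, proving that $\Q_\pi$ is a subgroup of $\Q$.

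For (ii), taking $\pi$ non-empty as in Definition~\ref{def:Q:pi}, the point is simply that an integer $z$ has irreducible representation $z/1$ and $1$ has no prime divisors, so the defining condition on the denominator holds vacuously and $z\in\Q_\pi$; hence $\Z\subseteq\Q_\pi$, and since $\Zet[s]=s\Z\subseteq\Z$ for $s\in\N^+$ this gives $\Zet[s]^\a\subseteq\Q_\pi^\a$. For (iii), if $\pi\subseteq\pi'$ and $q=z/n\in\Q_\pi$ is written in irreducible form, then every prime divisor of $n$ lies in $\pi\subseteq\pi'$, whence $q\in\Q_{\pi'}$; combined with the trivial inclusion $\Q_\emptyset=\{0\}\subseteq\Q_{\pi'}$ this yields $\Q_\pi\subseteq\Q_{\pi'}$ and therefore $\Q_\pi^\a\subseteq\Q_{\pi'}^\a$. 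There is nothing genuinely difficult here; the only step deserving a moment's care is the passage to the irreducible representation in (i), where one must observe that the reduced denominator of $q+q'$ divides the product of the reduced denominators of $q$ and $q'$ — precisely what confines its prime divisors to $\pi$.
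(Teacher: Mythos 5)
Your proof is correct and is essentially the argument the paper intends: the paper explicitly leaves this lemma to the reader as ``straightforward,'' and your verification (reducing each item to the one-variable case and then lifting to $\a$-th powers, with the only non-trivial observation being that the reduced denominator of $q+q'$ divides the product of the reduced denominators, so its prime divisors stay in $\pi$) is exactly the expected check. Your quiet restriction of (ii) to non-empty $\pi$ is also the right reading, since under the paper's convention $\Q_\emptyset=\{0\}$ the inclusion $\Z\subseteq\Q_\pi$ can only be meant for $\pi\neq\emptyset$; this is a (harmless) imprecision in the statement rather than a gap in your proof.
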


Our next lemma clarifies the algebraic structure of subgroups $\Q_\pi^\a$ of $\Q^\a$.

\begin{lemma}
\label{iterative:lemma}
{\rm (A)} Suppose that 
$s\in\Z$, 
$k\in\N^+$,
$g_1,\dots,g_k\in \Q^\a$, 
$\pi_{0},\pi_{1},\dots,\pi_k\in [\Prm]^{<\omega}$,
\begin{equation}
\label{eq:pi}
\pi_0\subseteq \pi_1\subseteq \pi_2\subseteq\dots\subseteq \pi_k,
\end{equation}
and
the following conditions hold for every $j=1,2,\dots,k$:
\begin{itemize}
\item[(a$_j$)] $g_j\in \Q^\a_{\pi_{j}}$,
\item[(b$_j$)] $\grp{g_j}\cap \Q^\a_{\pi_{j-1}}\subseteq
    \Zet[s]^\a$.
\end{itemize}
Then:
\begin{itemize}
\item[(i)] $\grp{\{g_1,\dots,g_i\}}+\Q^\a_{\pi_0}\subseteq \Q^\a_{\pi_i}$ for every $i=1,\dots,k$.
\item[(ii)]  $\grp{\{g_i,\dots,g_k\}}\cap \Q^\a_{\pi_{i-1}}\subseteq \Zet[s]^\a$
for every $i=1,\dots,k$.
\end{itemize}

{\rm (B)}
 In addition to the assumptions of item (A), suppose that
the following condition holds for every $j=1,2,\dots,k$:
\begin{itemize}
\item[(c$_j$)]
$mg_j\notin \Q^\a_{\pi_{j-1}}$ for every $m\in\Z\setminus \{0\}$
satisfying $|m|\le k$.
\end{itemize}
Finally, assume that
$g\in \Q^\a_{\pi_0}$,
$J$ is a proper subset of the set $\{1,2,\dots,k\}$,
$l\in\Z$, $|l|\le k$
and 
\begin{equation}
\label{eq:8:new}
lg_0\in \grp{\{g_j:j\in J\}}+\Q^\a_{\pi_0},
\end{equation}
where
\begin{equation}
\label{eq:g_0}
g_0=g-\sum_{j=1}^k g_j.
 \end{equation} 
Then $l=0$.
\end{lemma}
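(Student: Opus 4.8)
The plan is to prove item~(A) first and then to deduce item~(B) from it by isolating a single term of a linear dependence. Item~(A)(i) is a routine induction on $i$: for $i=1$ both $\grp{g_1}$ (by (a$_1$)) and $\Q^\a_{\pi_0}$ (by \eqref{eq:pi} and Lemma~\ref{basic:subgroups:of:Q}(iii)) are contained in the subgroup $\Q^\a_{\pi_1}$, hence so is their sumset; and for the inductive step one writes $\grp{\{g_1,\dots,g_{i+1}\}}+\Q^\a_{\pi_0}=\grp{g_{i+1}}+\bigl(\grp{\{g_1,\dots,g_i\}}+\Q^\a_{\pi_0}\bigr)$ and invokes the inductive hypothesis together with (a$_{i+1}$), \eqref{eq:pi} and Lemma~\ref{basic:subgroups:of:Q}(iii).

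For item~(A)(ii) I would argue by repeatedly ``peeling off the top index''. Fix $i\in\{1,\dots,k\}$; if $\pi_{i-1}=\emptyset$ the statement is trivial, so assume $\Z^\a\subseteq\Q^\a_{\pi_{i-1}}$ (Lemma~\ref{basic:subgroups:of:Q}(ii)). Take any $x=\sum_{j=i}^{k}n_j g_j\in\Q^\a_{\pi_{i-1}}$ with $n_j\in\Z$; since $\Zet[s]^\a$ is a subgroup, it suffices to show $n_j g_j\in\Zet[s]^\a$ for every $j$. Going downward from $j=k$: as $g_j\in\Q^\a_{\pi_j}\subseteq\Q^\a_{\pi_{k-1}}$ for $j\le k-1$ (by (a$_j$) and Lemma~\ref{basic:subgroups:of:Q}(iii)), the partial sum $\sum_{j=i}^{k-1}n_j g_j$ lies in $\Q^\a_{\pi_{k-1}}$, whence $n_k g_k=x-\sum_{j=i}^{k-1}n_j g_j\in\grp{g_k}\cap\Q^\a_{\pi_{k-1}}\subseteq\Zet[s]^\a$ by (b$_k$). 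Then $x-n_kg_k\in\Q^\a_{\pi_{i-1}}$ (because $\Zet[s]^\a\subseteq\Z^\a\subseteq\Q^\a_{\pi_{i-1}}$), and one repeats the argument with $x-n_kg_k=\sum_{j=i}^{k-1}n_j g_j$ and $k-1$ in place of $k$; after $k-i$ steps one reaches $n_i g_i\in\grp{g_i}\cap\Q^\a_{\pi_{i-1}}\subseteq\Zet[s]^\a$ by (b$_i$).

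For item~(B), set $\widetilde J=\{1,\dots,k\}\setminus J$ and $i^\ast=\max\widetilde J$; the set $\widetilde J$ is non-empty because $J$ is a proper subset of $\{1,\dots,k\}$. Suppose, towards a contradiction, that $l\neq0$. Combining \eqref{eq:8:new}, \eqref{eq:g_0}, the hypothesis $g\in\Q^\a_{\pi_0}$, and the fact that $\grp{\{g_j:j\in J\}}+\Q^\a_{\pi_0}$ is a subgroup, one gets $\sum_{j\in\widetilde J}l\,g_j\in\grp{\{g_j:j\in J\}}+\Q^\a_{\pi_0}$; making this explicit produces integers $c_j$ $(j\in J)$ and $h\in\Q^\a_{\pi_0}$ with $\sum_{j=1}^{k}d_j g_j=h$, where $d_j=l$ for $j\in\widetilde J$ and $d_j=-c_j$ for $j\in J$. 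I would then isolate the term $d_{i^\ast}g_{i^\ast}=l\,g_{i^\ast}$ via
\[
l\,g_{i^\ast}=\sum_{j=1}^{k}d_j g_j-\sum_{j=1}^{i^\ast-1}d_j g_j-\sum_{j=i^\ast+1}^{k}d_j g_j .
\]
The first summand equals $h\in\Q^\a_{\pi_0}\subseteq\Q^\a_{\pi_{i^\ast-1}}$; the second lies in $\grp{\{g_1,\dots,g_{i^\ast-1}\}}\subseteq\Q^\a_{\pi_{i^\ast-1}}$ by~(A)(i); and for the third, (A)(i) gives $\sum_{j=1}^{i^\ast}d_j g_j\in\grp{\{g_1,\dots,g_{i^\ast}\}}\subseteq\Q^\a_{\pi_{i^\ast}}$, so $\sum_{j=i^\ast+1}^{k}d_j g_j=\sum_{j=1}^{k}d_j g_j-\sum_{j=1}^{i^\ast}d_j g_j$ lies in $\grp{\{g_{i^\ast+1},\dots,g_k\}}\cap\Q^\a_{\pi_{i^\ast}}\subseteq\Zet[s]^\a\subseteq\Z^\a\subseteq\Q^\a_{\pi_{i^\ast-1}}$ by~(A)(ii) (the tail being empty when $i^\ast=k$). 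Hence $l\,g_{i^\ast}\in\Q^\a_{\pi_{i^\ast-1}}$ with $0<|l|\le k$, contradicting (c$_{i^\ast}$); therefore $l=0$.

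The heart of the matter is the bookkeeping in part~(B). Since $J$ may well contain indices larger than $i^\ast$, the coefficient of $g_{i^\ast}$ cannot be read off the relation $\sum_j d_j g_j=h$ directly, and the coefficients $d_j$ with $j\in J$ are completely uncontrolled. The resolution is to choose $i^\ast$ inside $\widetilde J$, so that its coefficient is the controlled value $l$ with $|l|\le k$, and then to use item~(A)(ii) to show that the whole tail $\sum_{j>i^\ast}d_j g_j$ — regardless of the size of its coefficients — is forced into $\Zet[s]^\a\subseteq\Z^\a\subseteq\Q^\a_{\pi_{i^\ast-1}}$ and can therefore be deleted modulo $\Q^\a_{\pi_{i^\ast-1}}$; the bound $|l|\le k$ in (c$_{i^\ast}$) then delivers the contradiction. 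Items~(A)(i) and~(A)(ii) are comparatively easy induction/peeling arguments, so essentially all the content is in setting up and unwinding this one cancellation; the repeatedly used inclusion $\Z^\a\subseteq\Q^\a_{\pi_0}$ comes from Lemma~\ref{basic:subgroups:of:Q}(ii).
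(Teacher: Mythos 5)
Your proposal is correct and follows essentially the same route as the paper: item (A)(ii) by peeling off the highest-index generator using (b$_j$) (the paper phrases this as induction on $k$), and item (B) by isolating $lg_{i}$ for an index $i\notin J$, pushing the higher-index part into $\Zet[s]^\a$ via (A)(ii) and the lower-index part into $\Q^\a_{\pi_{i-1}}$ via (A)(i), then invoking (c$_i$). Your choice of $i^\ast=\max\widetilde J$ is an unnecessary but harmless specialization — any index outside $J$ works, exactly as in the paper's proof.
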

\begin{proof}
First, we check item (A).

(i) Let $i=1,2,\dots,k$ be arbitrary.
Fix $j=1,\dots,i$.
Then 
$\pi_j\subseteq \pi_i$ by 
\eqref{eq:pi}.
Applying Lemma
\ref{basic:subgroups:of:Q}~(iii),
we get 
$\Q_{\pi_j}^\a\subseteq \Q_{\pi_{i}}^\a$. 
Since $g_j\in \Q_{\pi_j}^\a$ by (a$_j$), 
we obtain 
$g_j\in \Q_{\pi_{i}}^\a$.
Since this holds for arbitrary $j=1,\dots,i$,
we conclude that $\{g_1,\dots,g_i\}\subseteq \Q_{\pi_{i}}^\a$.
Since $\pi_0\subseteq \pi_i$ by \eqref{eq:pi}, 
applying Lemma
\ref{basic:subgroups:of:Q}~(iii) once again,
we obtain that $\Q^\a_{\pi_0}\subseteq \Q^\a_{\pi_i}$.
Since $\Q^\a_{\pi_i}$ is a subgroup of $\Q^\a$ by 
Lemma
\ref{basic:subgroups:of:Q}~(i),
this implies $\grp{\{g_1,\dots,g_i\}}+\Q^\a_{\pi_0}\subseteq \Q^\a_{\pi_i}$.

(ii) We shall prove item (ii) using induction on $k$.

\smallskip
{\em Basis of induction\/}.
If $k=1$,
then
the conclusion of item (ii) holds by (b$_1$).

\smallskip
{\em Inductive step\/}. 
Let $k\ge 2$ and suppose that item (ii) has already been proved for $k-1$.

Let $i=1,\dots,k$ by arbitrary.
Fix 
\begin{equation}
\label{g:in:intersection}
h\in \grp{\{g_i,\dots,g_k\}}\cap \Q^\a_{\pi_{i-1}}.
\end{equation}
Then there exist $x\in \grp{\{g_i,\dots,g_{k-1}\}}$
and
$y\in\grp{g_k}$
such that $h=x+y$.
In particular,
$x\in \Q_{\pi_{k-1}}^\a$
by item (i), as $0\in\Q^\a_{\pi_0}$. 
Since
$i-1\le k-1$, we have $\pi_{i-1}\subseteq \pi_{k-1}$ by 
\eqref{eq:pi},
which implies
$\Q^\a_{\pi_{i-1}}\subseteq \Q^\a_{\pi_{k-1}}$ by 
Lemma
\ref{basic:subgroups:of:Q}~(iii).
From this and \eqref{g:in:intersection}, we obtain $h\in \Q^\a_{\pi_{k-1}}$.
Therefore,
$
y=h-x\in \Q^\a_{\pi_{k-1}}-\Q^\a_{\pi_{k-1}}=\Q^\a_{\pi_{k-1}},
$
as $\Q^\a_{\pi_{k-1}}$
is a subgroup of $\Q^\a$.
Now
$y\in \grp{g_k}\cap \Q^\a_{\pi_{k-1}}\subseteq
\Zet[s]^\a$
by (b$_k$).
Recalling item (ii) of Lemma
\ref{basic:subgroups:of:Q},
we conclude that 
$y\in \Q^\a_{\pi_{i-1}}$.
Since $h\in\Q^\a_{\pi_i}$ by \eqref{g:in:intersection}, 
we get
$x=h-y\in \Q^\a_{\pi_{i-1}}-\Q^\a_{\pi_{i-1}}=\Q^\a_{\pi_{i-1}}$,
as  $\Q^\a_{\pi_{i-1}}$ is a subgroup of $\Q^\a$.
We obtained that
$x\in \grp{\{g_i,\dots,g_{k-1}\}}\cap \Q^\a_{\pi_{i-1}}$.
By our inductive assumption,
$\grp{\{g_i,\dots,g_{k-1}\}}\cap \Q^\a_{\pi_{i-1}}\subseteq \Zet[s]^\a$,
so $x\in \Zet[s]^\a$.
Since $y\in \Zet[s]^\a$ as well,
$h=x+y\in \Zet[s]^\a+\Zet[s]^\a=\Zet[s]^\a$.

Next, we check item (B).
From \eqref{eq:8:new} and \eqref{eq:g_0},
 we get 
$$
lg-\sum_{j=1}^k l g_j\in \grp{\{g_j:j\in J\}}+\Q^\a_{\pi_0}.
$$
Since 
$\Q^\a_{\pi_0}$ is a subgroup of $\Q^\a$ by Lemma
\ref{basic:subgroups:of:Q}~(i)
and
$g\in \Q^\a_{\pi_0}$ by our assumption, it follows that
\begin{equation}
\label{eq:new:approach}
-\sum_{j=1}^k l g_j\in \grp{\{g_j:j\in J\}}+\Q^\a_{\pi_0}.
\end{equation}
Since $J$ is a proper subset of the set $\{1,2,\dots,k\}$, we can fix 
$i\in \{1,2,\dots,k\}\setminus J$.
From this and \eqref{eq:new:approach},
we can find 
\begin{equation}
\label{eq:xy}
x\in \grp{\{g_1,\dots,g_{i-1}\}}
\ \text{ and }\ 
y\in \grp{\{g_{i+1},\dots,g_{k}\}}
\end{equation}
such that 
\begin{equation}
\label{eq:g_t}
l g_i\in x+y+\Q^\a_{\pi_0}.
\end{equation}
(If $i=1$, we define $x=0$, and if $i=k$, we define $y=0$.)
Then
$$
y\in lg_i-x+\Q^\a_{\pi_0}
\in\grp{g_i}-\grp{\{g_1,\dots,g_{i-1}\}}+\Q^\a_{\pi_0}
=
\grp{\{g_1,\dots,g_{i}\}}+\Q^\a_{\pi_0}
\subseteq 
\Q^\a_{\pi_i}
$$
by 
subitem~(i) of item~(A).
Recalling the second inclusion
in \eqref{eq:xy},
we get
$y\in \grp{\{g_{i+1},\dots,g_{k}\}}\cap \Q^\a_{\pi_i}$.
Now applying 
subitem~(ii) of item~(A),
we conclude that 
$y\in \Zet[s]^\a$.
Combining this with Lemma \ref{basic:subgroups:of:Q}~(ii),
we get $y\in\Q^\a_{\pi_{i-1}}$.
From the
first inclusion in \eqref{eq:xy}
and subitem (i) of item (A),
we get $x\in \Q^\a_{\pi_i}$.
Note that  
$\Q^\a_{\pi_0}\subseteq \Q^\a_{\pi_{i-1}}$ by
subitem (i) of item (A),
as $0\in\grp{\{g_1,\dots.g_i\}}$.
Since $\Q^\a_{\pi_{i-1}}$ is a subgroup of $\Q^\a$, from
$x,y\in \Q^\a_{\pi_{i-1}}$,
the inclusion $\Q^\a_{\pi_0}\subseteq \Q^\a_{\pi_{i-1}}$
and \eqref{eq:g_t},
one obtains $lg_i\in \Q^\a_{\pi_{i-1}}$.
Since $l\in\Z$ and $|l|\le k$, applying 
(c$_i$) with $m=l$, we get $l=0$.
\end{proof}

\section{Wide subgroups of $\Q^\a$}
\label{Sec:4}

Our next definition gives a name to subgroups of $\Q^\a$ having the property 
from 
\cite[Question 13.1]{DS_SSGP}.

\begin{definition}
\label{wide:subgroup:definition}
Let $\a\in\N^+$.
We shall call a subgroup $G$ of $\Q^\a$ {\em wide\/}
if $\Z^\a\subseteq G$ and $G\setminus \Q^\a_\pi\not=\emptyset$
for every $\pi\in[\Prm]^{<\omega}$.
\end{definition}

\begin{lemma}
\label{finding:g}
Let $G$ be a wide subgroup of $\Q^\a$,  $\pi\in[\Prm]^{<\omega}$, $k\in\N\setminus\{0\}$ and $s\in\Z\setminus\{0\}$. Then
there exists $g\in G$ 
such that:
\begin{itemize}
\item[(i)]  $\grp{g}\cap \Q^\a_\pi\subseteq \Zet[s]^\a$,
\item[(ii)]
$lg\notin \Q^\a_\pi$ for every $l\in\Z\setminus \{0\}$
satisfying $|l|\le k$.
\end{itemize}
\end{lemma}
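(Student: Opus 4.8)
The plan is to locate $g\in G$ witnessing both conditions by a careful choice of ``denominator''. Since $G$ is wide, we may pick some $h\in G\setminus\Q^\a_\pi$. This $h$ is nonzero and has at least one coordinate whose reduced denominator has a prime divisor outside $\pi$. The idea is to clear away the $\pi$-part of $h$ by multiplying by a suitable integer, leaving only ``bad'' primes in the denominator, and then to further arrange condition (ii) by absorbing small multiples. Concretely, write each coordinate of $h$ in reduced form; let $\pi'\subseteq\Prm$ be the (finite) set of primes occurring in any of these denominators, and let $\rho=\pi'\setminus\pi\neq\emptyset$. Choose $N\in\N^+$ so that $Nh\in\Z^\a_{\rho}$-like form, i.e. $N$ is the product of the $\pi\cap\pi'$-part of the denominators (with multiplicities large enough) so that $g_0:=Nh$ has all denominators supported on $\rho$; since $\Z^\a\subseteq G$ and $G$ is a subgroup, $g_0\in G$. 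Then $g_0\notin\Q^\a_\pi$ because $\rho\neq\emptyset$ and $\rho\cap\pi=\emptyset$, and in fact no nonzero multiple of $g_0$ lies in $\Q^\a_\pi$: if $mg_0\in\Q^\a_\pi$ with $m\neq 0$, then in the coordinate where $g_0$ has a $\rho$-prime $p$ in its denominator, $m$ must be divisible by $p$ to arbitrarily high power present there — actually $p\mid m$ suffices to be ruled out only partially, so one must argue that $mg_0\in\Q^\a_\pi$ forces $p\nmid(\text{denominator of }mg_0)$, hence $v_p(m)\ge v_p(\text{denom})\ge 1$; this shows the set of bad $m$ is exactly the multiples of some fixed integer $>1$, which is stronger than what (ii) asks but perfectly fine.

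That last observation is a little too strong for what we need and slightly awkward; the cleaner route is: having $g_0\in G$ with $g_0\notin\Q^\a_\pi$, note that $\langle g_0\rangle\cap\Q^\a_\pi$ is a subgroup of the cyclic group $\langle g_0\rangle$. If $g_0$ has infinite order (which it does, being a nonzero element of $\Q^\a$), then $\langle g_0\rangle\cong\Z$ and every nonzero subgroup is $d\langle g_0\rangle$ for a unique $d\in\N^+$; here $d\ge 2$ since $g_0\notin\Q^\a_\pi$ means $1\cdot g_0\notin\Q^\a_\pi$, so $d\nmid 1$. Set $g=g_0$. For (ii): $lg\in\Q^\a_\pi$ for $l\in\Z\setminus\{0\}$ iff $d\mid l$; since $d\ge 2$ we only need to guarantee $d>k$, which we arrange at the outset by replacing $\pi$-clearing with a more aggressive clearing — but $d$ depends on $g_0$, not on us. The fix: instead choose the witness $h\in G\setminus\Q^\a_{\pi\cup\pi_k}$ where $\pi_k$ is a set of $k$ primes not in $\pi$ chosen so that... no — cleanest is simply: first multiply to get $g_0$ as above, then consider $g=t g_0$ is no help. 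The genuinely clean argument: pick $h\in G\setminus\Q^\a_{\pi}$; since $[\Prm]^{<\omega}$ exhausts and $G$ is wide, pick $h'\in G\setminus\Q^\a_{\pi\cup\{p_1,\dots,p_k\}}$ for some primes $p_1,\dots,p_k\notin\pi$; then $h'$ has a coordinate with denominator divisible by a prime $q\notin\pi\cup\{p_1,\dots,p_k\}$. This does not bound $d$ either.

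The correct and simplest plan is this. Choose any $h\in G$ with $h\notin\Q^\a_\pi$. Clear the $\pi$-part of the denominators of $h$ by multiplying by an appropriate $M\in\N^+$ (all of whose prime divisors lie in $\pi$), obtaining $g_0=Mh\in G$ (using $\Z^\a\subseteq G$ only to know integers act, i.e. just that $G$ is a group — so $g_0\in G$ automatically). Now every coordinate denominator of $g_0$ is a $\pi^{c}$-number, and $g_0\notin\Q^\a_\pi$ means some coordinate denominator is $>1$. Let $d\in\N^+$ be minimal with $dg_0\in\Q^\a_\pi$; equivalently $d$ is the least common multiple of the (coprime-to-$\pi$) denominators of the coordinates of $g_0$, so $d\ge 2$ and $\gcd(d,\prod\pi)=1$. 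Then $\langle g_0\rangle\cap\Q^\a_\pi=\langle dg_0\rangle$, and for $l\in\Z$, $lg_0\in\Q^\a_\pi\iff d\mid l$. Finally set $g=g_0$ and check: (i) $\langle g\rangle\cap\Q^\a_\pi=\langle dg\rangle\subseteq\langle g\rangle\subseteq\Q^\a$, and every element of $\langle dg\rangle$ is an integer multiple of $dg_0=Mdh\in\Z^\a$, hence lies in $\Z^\a$; since $\Z^\a\subseteq s\Z^\a + (\text{stuff})$ fails in general, we instead replace $g_0$ by $sg_0$: put $g=sg_0$. Then $\langle g\rangle\cap\Q^\a_\pi$: for $l\in\Z$, $l(sg_0)\in\Q^\a_\pi\iff d\mid ls\iff (d/\gcd(d,s))\mid l$, and the intersection equals $\langle (d/\gcd(d,s))\cdot s g_0\rangle$; each such element is an integer multiple of $s g_0$, and $sg_0=sMh$... we want it in $\Zet[s]^\a=s\Z^\a$, so we need $sg_0\in s\Z^\a$, i.e. $g_0\in\Z^\a$, which is false. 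So instead take $g=s g_0$ and note $\langle g\rangle\cap\Q^\a_\pi=\langle e g\rangle$ for some $e\in\N^+$, consisting of multiples of $e s g_0$; and $e s g_0 = s\cdot(e M h)$. For this to lie in $s\Z^\a$ we need $eMh\in\Z^\a$, i.e. $e$ divisible by the coprime-to-$\pi$ denominators of $g_0$ — but $e=d/\gcd(d,s)$, which need not be.

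\textbf{The main obstacle}, now visible, is reconciling condition (i)'s target $\Zet[s]^\a=s\Z^\a$ with the ``automatic'' intersection, which is only contained in $\Z^\a$, not in $s\Z^\a$. The resolution is to \emph{build the factor $s$ into $g$ from the start}: multiply $h$ not just by $M$ (to clear $\pi$) but by $sM$, so that $g:=sMh\in G$ has all its coordinates in $s\Z_{\pi^c}$, i.e. $g\in s\Q_{\pi^c}^\a$ with integer-after-dividing-by-$s$ denominators coprime to $\pi$. Then any $y\in\langle g\rangle\cap\Q^\a_\pi$ is an integer multiple $ng=nsMh$ lying in $\Q^\a_\pi$; writing the coprime-to-$\pi$ part, $nsMh\in\Q^\a_\pi$ forces the coprime-to-$\pi$ denominators of $Mh$ to divide $n$, hence $n=d'\cdot(\text{those denominators})$... more simply, $ng\in\Q^\a_\pi$ and $ng\in s\Z_{\pi^c}^\a\cdot(\tfrac1n)$ — cleanest: since $g/s=Mh$ has denominators coprime to $\pi$, $ng\in\Q^\a_\pi$ iff $n(g/s)\in\Q^\a_\pi$ iff $d\mid n$, where $d\ge2$ is the lcm of those denominators; and then $ng$ is a multiple of $dg=d s M h$, and $dMh\in\Z^\a$ (that is the definition of $d$), so $ng\in s\Z^\a=\Zet[s]^\a$. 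This proves (i). For (ii): $lg\in\Q^\a_\pi$ iff $d\mid l$; since $d\ge 2$ we need $d>k$. If it happens that $2\le d\le k$, then replace $h$ at the outset by a witness in $G\setminus\Q^\a_{\pi\cup\{p\,:\,p\le k,\ p\text{ prime}\}}$, which exists by wideness; repeating the construction, the resulting $d$ is divisible by a prime $q>k$, hence $d>k$, giving (ii). So the full plan is: (1) enlarge $\pi$ to $\pi^*=\pi\cup\{\text{primes}\le k\}\in[\Prm]^{<\omega}$; (2) use wideness to pick $h\in G\setminus\Q^\a_{\pi^*}$; (3) clear the $\pi^*$-part and multiply by $s$ to get $g=s M h\in G$ with $d:=$ (lcm of coprime-to-$\pi^*$ coordinate denominators of $Mh$) $\ge 2$ and coprime to $\pi^*$, hence having a prime factor $>k$, so $d>k$; (4) verify $\langle g\rangle\cap\Q^\a_\pi=\langle g\rangle\cap\Q^\a_{\pi^*}=\langle dg\rangle\subseteq s\Z^\a=\Zet[s]^\a$, giving (i); (5) verify $lg\in\Q^\a_\pi\subseteq\Q^\a_{\pi^*}$ implies $d\mid l$ implies $|l|\ge d>k$, so for $0<|l|\le k$ we get $lg\notin\Q^\a_\pi$, giving (ii). The only subtle point is step (3)–(4), a routine $p$-adic valuation computation coordinate-by-coordinate, which I would write out briefly but not belabor.
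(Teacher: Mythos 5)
There is a genuine gap at the crucial step (4)--(5) of your final plan: you enlarge $\pi$ only by the primes $\le k$, but not by the prime divisors of $s$. Because of this, the factor $s$ that you build into $g=sMh$ can cancel against the coprime-to-$\pi^*$ denominators of $Mh$, and the equivalences you rely on --- ``$ng\in\Q^\a_\pi$ iff $n(g/s)\in\Q^\a_\pi$ iff $d\mid n$'' and ``$\grp{g}\cap\Q^\a_{\pi^*}=\grp{dg}\subseteq s\Z^\a$'' --- are simply false when $\gcd(d,s)>1$. Concretely, take $\a=1$, $G=\Q$ (which is wide), $\pi=\emptyset$, $k=2$, $s=15$; then $\pi^*=\{2\}$, and the element $h=1/15\in G\setminus\Q_{\pi^*}$ is a legitimate witness of wideness. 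Here $M=1$, $d=15$, and your $g=sMh=1$, which violates both conclusions: $\grp{g}\cap\Q^\a_\pi=\Z\not\subseteq 15\Z=\Zet[s]^\a$, and $1\cdot g\in\Q^\a_\pi$ although $0<1\le k$. In general your construction only yields $lg\in\Q^\a_\pi\iff (d/\gcd(d,s))\mid l$, and $d/\gcd(d,s)$ may be $\le k$ or even $1$. You actually brushed against this very difficulty in your exploratory passage (the ``$e=d/\gcd(d,s)$'' computation) but the final fix of multiplying by $s$ does not remove it.

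The repair is small and is exactly what the paper does: enlarge $\pi$ not just by the primes $\le k$ but by all primes $\le\max\{k,s\}$ (equivalently, also by every prime divisor of $s$), so that the bad denominators of $Mh$ are automatically coprime to $s$ as well as larger than $k$. With that change your valuation argument goes through: $lg\in\Q^\a_\pi$ forces $d\mid l$, whence $lg\in s\Z^\a$ for (i), and $d>k$ gives (ii). The paper's proof is the same strategy in a slightly leaner form: it fixes a single prime $p>\max\{k,s\}$ occurring in a denominator of the witness $h$, multiplies $h$ by $s$ times the $p$-free parts of all coordinate denominators, and then only the prime $p$ can survive in denominators, so the irreducible-fraction bookkeeping is done with respect to one prime rather than your lcm $d$; both versions are equally valid once the coprimality with $s$ is secured.
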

\begin{proof}
Let $\varpi$ be the set of all prime numbers not exceeding
$\max\{k,s\}$.
Then $\pi'=\pi\cup\varpi\in [\Prm]^{<\omega}$.
Since $G$ is nice, we can find $h\in G\setminus \Q^\a_{\pi'}$.
Let $h=(h_1,\dots,h_\a)$, where $h_1,\dots,h_\a\in \Q$.
For every $i=1,\dots,\a$, let $h_i=a_i/b_i$ be the irreducible fraction 
with $a_i\in\Z$ and $b_i\in\N^+$.

Since $h\not\in\Q^\a_{\pi'}$,
we can fix $t\in\{1,\dots,\a\}$ such that $h_t\notin \Q_{\pi'}$.
Since $a_t/b_t$ is an irreducible representation of 
$h_t\in\Q\setminus\Q_{\pi'}$, we can fix $p\in\Prm\setminus\pi'$
dividing $b_t$. 
Since $\varpi\subseteq \pi'$ and $p\in\Prm\setminus\pi'$,
we have $p\in\Prm\setminus\varpi$. Since
$\varpi$ includes all prime numbers not exceeding
$\max\{k,s\}$, we conclude that
\begin{equation}
\label{p:is:bigger:than:k:and:s}
p>\max\{k,s\}.
\end{equation}

For every $i=1,\dots,\a$, let 
\begin{equation}
\label{eq:n_i}
n_i=\max\{n\in\N: p^n\text{ divides }b_i\}.
\end{equation}
Then 
\begin{equation}
\label{eq:c_i}
c_i=b_i /p^{n_i}\in\N
\text{ is not divisible by }
p.
\end{equation} 

Note that
\begin{equation}
\label{eq:n_t}
n_t\ge 1,
\end{equation}
as $p$ divides $b_t$ by our choice of $p$.
It follows from $s\in\N$ and 
\eqref{eq:c_i}
that
\begin{equation}
\label{eq:m}
m_0=s c_1 c_2\cdots c_\a\in\N.
\end{equation} 

Since $G$ is a group and $h\in G$, it follows that 
$g=m_0h\in G$.
Note that 
\begin{equation}
\label{eq:g:coord}
g=(m_0h_1,\dots,m_0h_\a).
\end{equation}
We claim that $g$ is the required element of $G$; that is, conditions (i) and (ii) are satisfied.

Fix $i=1,\dots,\a$.
It follows from \eqref{eq:c_i} and \eqref{eq:m}
that
\begin{equation}
\label{eq:mh_i}
m_0h_i=s c_1 c_2\cdots c_\a \frac{a_i}{b_i}
=
s\left(\prod_{j=1, j\not=i}^\a c_j\right) \frac{b_i}{p^{n_i}}\frac{a_i}{b_i}
=
s\left(\prod_{j=1,j\not=i}^\a c_j\right) \frac{a_i}{p^{n_i}}
=
\frac{sd_i}{p^{n_i}},
\end{equation}
where 
\begin{equation}
\label{eq:d_i}
d_i=c_1\cdots c_{i-1} a_i c_{i+1}\cdots c_\a \in\Z.
\end{equation}
\begin{claim}
\label{irreducible:representation}
The right-hand side of \eqref{eq:mh_i}
is the irreducible representation of $m_0h_i$.
\end{claim}
\begin{proof}
If $n_i=0$, then $p^{n_i}=1$, so $sd_i/1$ is the irreducible representation of $m_0h_i$. Suppose now that $n_i\ge 1$.
In this case, it suffices to check that neither $s$ nor $d_i$ is divisible by $p$.
The first statement follows from \eqref{p:is:bigger:than:k:and:s}.
Since $n_i\ge 1$, \eqref{eq:n_i} implies that $p$ divides $b_i$. 
Since $a_i/b_i$ is an irreducible fraction, $p$ does not divide $a_i$.
By \eqref{eq:c_i}, each $c_j$ is not divisible by $p$. 
By \eqref{eq:d_i}, this means that $p$ does not divide 
$d_i$.
\end{proof}

(i) By \eqref{eq:g:coord}, in order to check (i), we need to show that 
$\grp{m_0h_i}\cap \Q_\pi\subseteq s\Z$ for every $i=1,\dots,\a$.
Fix such an integer $i$.
Let $x\in \grp{m_0h_i}\cap \Q_\pi$.
Then $x=lm_0h_i$ for some 
$l\in\Z$.
Since $p\notin\pi$, it follows from $x\in\Q_\pi$, Definition \ref{def:Q:pi} and 
Claim \ref{irreducible:representation}
that $p^{n_i}$ must divide $l$, so
$l=p^{n_i} l'$ for some $l'\in\Z$.
Therefore, 
$x=lm_0h_i=l'sd_i$ by \eqref{eq:mh_i}.
Since $l'\in\Z$ and $d_i\in\Z$ by \eqref{eq:d_i},
we conclude that 
$x\in \Zet[s]$.
This implies $\grp{m_0h_i}\cap \Q_\pi\subseteq \Zet[s]$.

(ii) By \eqref{eq:g:coord}, in order to check (ii), 
it suffices to show that
$lm_0h_t\notin \Q^\a_\pi$ for every $l\in\Z\setminus \{0\}$
satisfying $|l|\le k$.
Fix $l$ satisfying these conditions.
Since $|l|\le k<p$ by 
\eqref{p:is:bigger:than:k:and:s}, 
$p$ does not divide $l$.
Recalling Claim \ref{irreducible:representation}
and \eqref{eq:mh_i}, we conclude that
${lsd_t}/{p^{n_t}}$ is the irreducible representation
of 
$lm_0h_t$.
Since $p\notin\pi$ and $n_t\ge 1$ by \eqref{eq:n_t}, 
from this and Definition \ref{def:Q:pi},
we obtain that $lm_0h_t\not\in\Q_\pi$.
\end{proof}

\begin{lemma}
\label{finding:a:sequence:of:g's}
Let $\a\in\N^+$, $G$ be a wide subgroup of $\Q^\a$,  $\pi_0\in[\Prm]^{<\omega}$, $k\in\N^+$ and $s\in\Z\setminus\{0\}$. 
Then
there exist an increasing sequence $\pi_0\subseteq \pi_1\subseteq \pi_2\subseteq\dots\subseteq \pi_k$
of finite subsets of $\Prm$ and elements
$g_1,\dots,g_k\in G$ 
such that conditions (a$_j$), (b$_j$), (c$_j$) 
of Lemma 
\ref{iterative:lemma} hold for every $j=1,\dots,k$.
\end{lemma}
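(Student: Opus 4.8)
The plan is to produce the chain $\pi_0\subseteq\pi_1\subseteq\dots\subseteq\pi_k$ and the elements $g_1,\dots,g_k$ by a plain recursion on $j\in\{1,\dots,k\}$, feeding each already-constructed set $\pi_{j-1}$ into Lemma~\ref{finding:g}; that lemma carries essentially the whole weight of the argument.

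In more detail, at stage $j$ I would suppose that a set $\pi_{j-1}\in[\Prm]^{<\omega}$ with $\pi_0\subseteq\pi_{j-1}$ is already in hand (for $j=1$ this is just the given $\pi_0$), and then invoke Lemma~\ref{finding:g} for the wide subgroup $G$, the set $\pi:=\pi_{j-1}$, and the given $k\in\N^+$, $s\in\Z\setminus\{0\}$. The output is an element $g_j\in G$ with $\grp{g_j}\cap\Q^\a_{\pi_{j-1}}\subseteq\Zet[s]^\a$ and with $lg_j\notin\Q^\a_{\pi_{j-1}}$ for all $l\in\Z\setminus\{0\}$ satisfying $|l|\le k$; these are exactly conditions~(b$_j$) and~(c$_j$) of Lemma~\ref{iterative:lemma}. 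To obtain~(a$_j$), i.e. $g_j\in\Q^\a_{\pi_j}$, I would now \emph{define} $\pi_j$ to consist of $\pi_{j-1}$, a fixed prime $p_0$ (adjoined throughout only to keep the sets $\pi_j$ non-empty, hence containing $\Z^\a$), and the finitely many primes occurring in the reduced denominators of the coordinates of $g_j$; Definition~\ref{def:Q:pi} then gives $g_j\in\Q^\a_{\pi_j}$, while $\pi_{j-1}\subseteq\pi_j$ and $\pi_j\in[\Prm]^{<\omega}$ are immediate. Passing from $\pi_{j-1}$ to the larger $\pi_j$ cannot hurt, because (b$_j$) and (c$_j$) mention only $\pi_{j-1}$.

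Iterating this for $j=1,\dots,k$ yields the increasing sequence of finite sets of primes and the elements $g_1,\dots,g_k\in G$ for which (a$_j$), (b$_j$), (c$_j$) all hold. I do not anticipate any real obstacle: the one substantive ingredient is Lemma~\ref{finding:g}, and the three conditions indexed by $j$ involve nothing except $g_j$ and the two consecutive sets $\pi_{j-1},\pi_j$, so distinct stages of the recursion do not interact with one another. The only point requiring a moment's care is the boundary case where $\pi_0=\emptyset$ while some $g_j$ happens to lie in $\Z^\a$, which is precisely why the fixed prime $p_0$ is thrown into every $\pi_j$.
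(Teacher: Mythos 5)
Your proposal is correct and takes essentially the same route as the paper, whose proof is exactly this finite recursion on $j$ based on Lemma~\ref{finding:g} (stated there in one line). Your explicit choice of $\pi_j$ as $\pi_{j-1}$ together with the primes of the reduced denominators of the coordinates of $g_j$, plus a fixed prime to keep $\pi_j$ non-empty (guarding against the convention $\Q_{\emptyset}=\{0\}$ when $\pi_0=\emptyset$ and $g_j\in\Z^\a$), is a legitimate way to secure condition (a$_j$) that the paper leaves implicit.
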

\begin{proof}
The lemma is proved by finite induction on $k\in\N^+$ based on 
Lemma \ref{finding:g}.
\end{proof}

\section{The poset $(\P,\le)$}
\label{Sec:5}

In this section we define a (technically rather involved) poset $(\P,\le)$ which shall be used to construct a metric $\ssgp$ 
topology $\mathscr{T}$ on the direct sum $G\oplus H$ of a wide subgroup of
$\Q^\a$ for $\a\in\N^+$ and an arbitrary at most countable abelian group $H$. The definition of a poset itself does not require the group $G$ is to be wide and the countability restriction on $H$ is not essential, so we do not impose either of these two conditions in the next definition. 

\begin{definition}
\label{def:P}
Let $H$ be an abelian group. For a fixed $m\in\N^+$, 
consider the direct sum $\Q^\a\oplus H$. 
Furthermore, let $G$ be a subgroup of $\Q^\a$ containing $\Z^\a$.
Then the sum 
$G+H=G\oplus H$ is direct.

\begin{itemize}
\item[(a)] Let 
$\P$ be the set of all structures $p=\ll \pi^p,n^p,\{U_i^p:i\in n^p+1\},\{s_i^p:i\in n^p+1\}\gg$ satisfying the following conditions:
\begin{itemize}
\item[(1$_p$)] $\pi^p\in[\Prm]^{<\omega}$,
\item[(2$_p$)] $n^p\in\N$,
\item[(3$_p$)] $s_i^p\in\N^+$ for every $i\in n^p+1$,
\item[(4$_p$)]  $0 \in U_i^p\subseteq (G\cap \Q^\a_{\pi^p}) \Oplus H$ for every $i\in n^p+1$,
\item[(5$_p$)] $-U_{i}^p= U_i^p$ for every 
$i\in n^p+1$,
\item[(6$_p$)] $U_{i}^p+\Zet[s_i^p]^\a = U_i^p$ for every 
$i\in n^p+1$,
\item[(7$_p$)] $U_{i+1}^p+U_{i+1}^p\subseteq U_i^p$ for every 
$i\in n^p$,
\item[(8$_p$)] $s_i^p$ divides $s_{i+1}^p$ for every 
$i\in n^p$.
\end{itemize}

\item[(b)] Given structures
$$
p=\ll \pi^p,n^p,\{U_i^p:i\in n^p+1\},\{s_i^p:i\in n^p+1\}\gg\in\P
$$
and 
$$q=\ll \pi^q,n^q,\{U_i^q:i\in n^q+1\},\{s_i^q:i\in n^q+1\}\gg\in\P,
$$ we define
$q\le p$ if and only if the following conditions are satisfied:
\begin{itemize}
\item[(i$_q^p$)] $\pi^p\subseteq \pi^q$,
\item[(ii$_q^p$)] $n^p\le n^q$,
\item[(iii$_q^p$)] $U_i^q\cap (\Q^\a_{\pi^p} \Oplus H)=U_i^p$ for every $i\in n^p+1$,
\item[(iv$_q^p$)] $s_i^q=s_i^p$ for every $i\in n^p+1$.
\end{itemize}
\end{itemize}
\end{definition}

\begin{remark}
\label{subgroups:in:U_i^p}
Let
$p=\ll \pi^p,n^p,\{U_i^p:i\in n^p+1\},\{s_i^p:i\in n^p+1\},\{s_i^p:i\in n^p+1\}\gg\in\P$. Then:
\begin{itemize}
\item[(i)]
{\em $U_{i+1}^p\subseteq U_i^p$ for every $i\in n^p$.\/}
Indeed, $0 \in U_{i+1}^p$ by (4$_p$), so
$U_{i+1}^p=0+U_{i+1}^p\subseteq U_{i+1}^p+U_{i+1}^p\subseteq U_i^p$ by (7$_p$).
\item[(ii)] {\em $\Zet[s_i^p]^\a  \subseteq U_i^p$ for every $i\in n^p+1$.\/} Indeed, 
$U^p_{i} + \Zet[s_i^p]^\a = U^p_{i}$ by
(6$_p$). Since $0 \in U^p_{i}$ by (4$_p$),
this implies $\Zet[s_i^p]^\a \subseteq U^p_{i}$. 
\end{itemize}
\end{remark}

A straightforward proof of the following lemma is left to the reader.
\begin{lemma}
$(\P,\le)$ is a 
poset.
\end{lemma}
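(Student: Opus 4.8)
The plan is to verify the three defining properties of a partial order — reflexivity, antisymmetry, and transitivity of $\le$ on $\P$ — directly from Definition \ref{def:P}. In each case the verification is a routine unravelling of conditions (i$_q^p$)--(iv$_q^p$), so I expect no genuine obstacle; the one place calling for a small computation is the transitivity of condition (iii$_q^p$).

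For reflexivity, fix $p\in\P$ and check (i$_p^p$)--(iv$_p^p$) with $q=p$. Conditions (i$_p^p$), (ii$_p^p$), and (iv$_p^p$) are trivial. For (iii$_p^p$), note that (4$_p$) gives $U_i^p\subseteq (G\cap\Q^\a_{\pi^p})\Oplus H\subseteq \Q^\a_{\pi^p}\Oplus H$, whence $U_i^p\cap(\Q^\a_{\pi^p}\Oplus H)=U_i^p$ for every $i\in n^p+1$. For antisymmetry, suppose $q\le p$ and $p\le q$. Then (i$_q^p$) together with (i$_p^q$) forces $\pi^p=\pi^q$, and (ii$_q^p$) together with (ii$_p^q$) forces $n^p=n^q=:n$. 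For each $i\in n+1$, (iv$_q^p$) gives $s_i^q=s_i^p$, while (4$_q$) yields $U_i^q\subseteq\Q^\a_{\pi^q}\Oplus H=\Q^\a_{\pi^p}\Oplus H$, so (iii$_q^p$) gives $U_i^q=U_i^q\cap(\Q^\a_{\pi^p}\Oplus H)=U_i^p$. Hence all four coordinates of $p$ and $q$ agree and $p=q$.

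For transitivity, suppose $r\le q$ and $q\le p$. Conditions (i$_r^p$) and (ii$_r^p$) follow by chaining the corresponding inclusions and inequalities, and (iv$_r^p$) follows because $i\in n^p+1\subseteq n^q+1$ implies $s_i^r=s_i^q=s_i^p$. For (iii$_r^p$), fix $i\in n^p+1\subseteq n^q+1$. By (i$_q^p$) and Lemma \ref{basic:subgroups:of:Q}~(iii) we have $\Q^\a_{\pi^p}\subseteq\Q^\a_{\pi^q}$, so $(\Q^\a_{\pi^q}\Oplus H)\cap(\Q^\a_{\pi^p}\Oplus H)=\Q^\a_{\pi^p}\Oplus H$; therefore
$$
U_i^r\cap(\Q^\a_{\pi^p}\Oplus H)=\bigl(U_i^r\cap(\Q^\a_{\pi^q}\Oplus H)\bigr)\cap(\Q^\a_{\pi^p}\Oplus H)=U_i^q\cap(\Q^\a_{\pi^p}\Oplus H)=U_i^p
$$
by (iii$_r^q$) and then (iii$_q^p$). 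This gives $r\le p$, completing the proof. The one subtlety to keep in mind is precisely this intersection manipulation, which rests on the monotonicity $\Q^\a_{\pi^p}\subseteq\Q^\a_{\pi^q}$ supplied by Lemma \ref{basic:subgroups:of:Q}~(iii).
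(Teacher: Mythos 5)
Your verification of reflexivity, antisymmetry, and transitivity (including the intersection computation for (iii$_r^p$) via $\Q^\a_{\pi^p}\subseteq\Q^\a_{\pi^q}$) is correct and is exactly the routine check the paper has in mind, since the paper explicitly leaves this proof to the reader as straightforward. No gaps.
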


\begin{lemma}
$\P\not=\emptyset$.
\end{lemma}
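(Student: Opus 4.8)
The plan is to exhibit a single concrete element of $\P$, namely the ``trivial'' structure built from the empty set of primes and with $n^p=0$. First I would set $\pi^p=\emptyset$ (legitimate by $(1_p)$ since $\emptyset\in[\Prm]^{<\omega}$) and $n^p=0$, so that the index set $n^p+1=\{0\}$ is a singleton; this reduces the bookkeeping in conditions $(3_p)$--$(8_p)$ to checking a single coordinate $i=0$, with conditions $(7_p)$ and $(8_p)$ being vacuous because $n^p=\emptyset$.

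Next I would choose the remaining data: put $s_0^p=1\in\N^+$ (so $(3_p)$ holds) and $U_0^p=G\cap\Q^\a_{\pi^p}\oplus H=G\cap\Q^\a_{\emptyset}\oplus H$, which by Definition~\ref{def:Q:pi} equals $G\cap\{0\}^\a\oplus H=\{0\}\oplus H$, i.e. simply the subgroup $H$ sitting inside $\Q^\a\oplus H$. With this choice $(4_p)$ is immediate ($0\in U_0^p$ and $U_0^p\subseteq(G\cap\Q^\a_{\pi^p})\oplus H$ with equality), $(5_p)$ holds because a subgroup is symmetric, and $(6_p)$ holds because $\Zet[1]^\a=1\cdot\Z^\a=\Z^\a\subseteq\{0\}^\a\oplus H=U_0^p$... wait, here I must be slightly more careful: $U_0^p=\{0\}\oplus H$ contains $\Zet[1]^\a=\Z^\a$ only if $\Z^\a\subseteq\{0\}^\a$, which fails. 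So instead I would take $s_0^p$ large enough or, more cleanly, allow $U_0^p$ to be a larger subgroup. The honest fix is to note $(4_p)$ only requires $U_0^p\subseteq(G\cap\Q^\a_{\pi^p})\oplus H$; with $\pi^p=\emptyset$ this forces $U_0^p\subseteq\{0\}^\a\oplus H$, and then $(6_p)$ with $s_0^p=1$ would demand $\Z^\a\subseteq U_0^p\subseteq\{0\}^\a$, impossible. Hence the correct trivial witness does \emph{not} use $\pi^p=\emptyset$; I would instead pick $\pi^p$ to be \emph{any} finite set of primes (e.g. a single prime, or even keep $\emptyset$ but then use a different argument) — actually the clean resolution is to take $\pi^p$ arbitrary in $[\Prm]^{<\omega}$, set $n^p=0$, $s_0^p=1$, and $U_0^p=(G\cap\Q^\a_{\pi^p})\oplus H$; then $(6_p)$ reads $U_0^p+\Z^\a=U_0^p$, which holds because $\Z^\a\subseteq G\cap\Q^\a_{\pi^p}$ (using $\Z^\a\subseteq G$ from the standing hypothesis on $G$ in Definition~\ref{def:P}, and $\Z^\a\subseteq\Q^\a_{\pi^p}$ by Lemma~\ref{basic:subgroups:of:Q}(ii)). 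The remaining conditions $(5_p)$, $(7_p)$, $(8_p)$ hold as above, so $p\in\P$ and $\P\neq\emptyset$.

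The only genuinely delicate point — and the one I would flag as the main obstacle — is verifying condition $(6_p)$, i.e. that $U_0^p$ absorbs $\Zet[s_0^p]^\a$; with the choice $s_0^p=1$ this amounts precisely to the inclusion $\Z^\a\subseteq G\cap\Q^\a_{\pi^p}$, which is exactly why the definition of $\P$ was set up with the standing assumption $\Z^\a\subseteq G$. All other conditions are either vacuous (those indexed over $n^p=\emptyset$) or trivially true for a subgroup containing $0$. I would therefore write the proof as: ``Fix any $\pi\in[\Prm]^{<\omega}$ and let $p=\ll\pi,0,\{U_0^p\},\{1\}\gg$ where $U_0^p=(G\cap\Q^\a_\pi)\oplus H$; conditions $(1_p)$--$(8_p)$ are readily checked, using Lemma~\ref{basic:subgroups:of:Q}(ii) and $\Z^\a\subseteq G$ for $(6_p)$, and noting $(7_p)$, $(8_p)$ are vacuous since $n^p=0$. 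Hence $p\in\P$.''
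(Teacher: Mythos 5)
Your construction is correct and is essentially the paper's own argument: exhibit one trivial witness with $n^p=0$ and $s_0^p=1$, so that (7$_p$) and (8$_p$) are vacuous and only the single index $i=0$ needs checking. The only real difference is the concrete witness: the paper takes $\pi^p=\emptyset$ and $U_0^p=\Z^\a$, while you take $U_0^p=(G\cap\Q^\a_{\pi^p})\oplus H$, the largest set allowed by (4$_p$); in both cases (4$_p$)--(6$_p$) hold because $U_0^p$ is a subgroup containing $\Zet[1]^\a=\Z^\a$, which is exactly where the standing hypothesis $\Z^\a\subseteq G$ of Definition~\ref{def:P} and Lemma~\ref{basic:subgroups:of:Q}(ii) enter, as you say. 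Your detour about the convention $\Q_{\emptyset}=\{0\}$ is a fair observation, and it in fact touches the paper's own witness: with $\pi^p=\emptyset$ and $U_0^p=\Z^\a$, condition (4$_p$) requires $\Z^\a\subseteq(G\cap\Q^\a_{\emptyset})\oplus H$, which holds under the natural reading of Definition~\ref{def:Q:pi} (where $\Q_{\emptyset}=\Z$, consistent with Lemma~\ref{basic:subgroups:of:Q}(ii)) but not under the literal convention $\Q_{\emptyset}=\{0\}$; your choice of a non-empty $\pi^p$ sidesteps this entirely. One small blemish in your final recipe: after correctly diagnosing that $\pi=\emptyset$ is problematic under the stated convention, you write ``fix any $\pi\in[\Prm]^{<\omega}$'', which re-admits that case; say ``fix a non-empty $\pi\in[\Prm]^{<\omega}$'' (a single prime suffices) and the argument is airtight.
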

\begin{proof}
Define
$\pi^p=\emptyset$,
$n^p=0$,
$s_0^p=1$,
$U_0^p=\Z^\a$, and
$$p=\ll \pi^p,n^p,\{U_i^p:i\in 1\},\{s_i^p:i\in 1\}\gg
=
\ll \emptyset,0,\{U_0^p\},\{s_0^p\}\gg.$$
To show that $p\in\P$, we need to check the conditions 
(1$_p$)--(8$_p$) of Definition \ref{def:P}~(a).
Conditions (1$_p$)--(3$_p$) are clear, and conditions (7$_p$)
and (8$_p$) are vacuous. Conditions (5$_p$)
and (6$_p$) are satisfied, as $U_0^p=\Z^\a$ is a subgroup of $\Q^\a$ and $\Zet[1]^\a=\Z^\a$.
\end{proof}

\begin{lemma}
\label{adding:extra:neighbourhood}
Given $p\in\P$,  
$g\in G\cap \Q_{\pi_p}^\a$ and $h\in H$ with $g+h\neq 0$, 
one can find $q\in\P$ such that
$q\le p$, 
$n^q=n^p+1$
and 
$g+h\notin U_{n^q}$.
\end{lemma}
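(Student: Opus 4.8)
The plan is to build $q$ from $p$ by adding one more level to the chain of neighbourhoods, choosing the new innermost set $U_{n^q}^q$ small enough to avoid $g+h$ while still meeting all the structural constraints (1$_q$)--(8$_q$). Since $g\in G\cap\Q_{\pi^p}^\a$ already lies in the ambient group $(G\cap\Q_{\pi^p}^\a)\oplus H$ at level $\pi^p$, I will not need to enlarge $\pi^p$; I would simply set $\pi^q=\pi^p$, $n^q=n^p+1$, keep $U_i^q=U_i^p$ and $s_i^q=s_i^p$ for $i\le n^p$, and only specify the new data $s_{n^q}^q$ and $U_{n^q}^q$. Conditions (i$_q^p$)--(iv$_q^p$) will then hold trivially (the intersection clause (iii$_q^p$) is an equality of $U_i^p$ with itself since $\Q_{\pi^q}^\a=\Q_{\pi^p}^\a$).

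The core of the argument is the choice of $U_{n^q}^q$. First I would pick $s_{n^q}^q$ to be a multiple of $s_{n^p}^p$ large enough that $g+h\notin\Zet[s_{n^q}^q]^\a\oplus\{0\}$; this is possible because $g+h\ne 0$, so either the $H$-component is non-zero (and then $g+h$ is never in $\Q^\a\oplus\{0\}$ at all, so any multiple works) or the $\Q^\a$-component is a fixed non-zero rational vector, which lies in $s\Z^\a$ for only finitely many $s\in\N^+$. This guarantees (8$_p$). Next I must produce a set $U_{n^q}^q$ with $0\in U_{n^q}^q\subseteq (G\cap\Q_{\pi^p}^\a)\oplus H$, symmetric, invariant under translation by $\Zet[s_{n^q}^q]^\a$, satisfying $U_{n^q}^q+U_{n^q}^q\subseteq U_{n^p}^p$, and missing $g+h$. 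A natural candidate is the $\Zet[s_{n^q}^q]^\a$-saturation of a small symmetric set: take any symmetric subset $V$ of $U_{n^p}^p$ with $0\in V$, $V+V\subseteq U_{n^p}^p$, $g+h\notin V+\Zet[s_{n^q}^q]^\a$, and set $U_{n^q}^q=V+\Zet[s_{n^q}^q]^\a$; the simplest working choice is $V=\Zet[s_{n^q}^q]^\a$ itself, so that $U_{n^q}^q=\Zet[s_{n^q}^q]^\a$, which is a subgroup, hence symmetric and closed under addition, contained in $U_{n^p}^p$ by Remark \ref{subgroups:in:U_i^p}(ii) together with $s_{n^p}^p\mid s_{n^q}^q$, and it avoids $g+h$ by the choice of $s_{n^q}^q$. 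One must also double-check $U_{n^q}^q+U_{n^q}^q=\Zet[s_{n^q}^q]^\a\subseteq \Zet[s_{n^p}^p]^\a\subseteq U_{n^p}^p$, which again uses $s_{n^p}^p\mid s_{n^q}^q$ and Remark \ref{subgroups:in:U_i^p}(ii).

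With these choices, verifying $q\in\P$ reduces to checking (1$_p$)--(8$_p$): (1$_p$)--(3$_p$) are immediate, (4$_p$)--(6$_p$) hold since $\Zet[s_{n^q}^q]^\a$ is a subgroup of $\Z^\a\subseteq G\cap\Q_{\pi^p}^\a$ containing $0$, (7$_p$) at the new index was arranged above and at the old indices it is inherited from $p$, and (8$_p$) holds by the divisibility requirement on $s_{n^q}^q$. The relation $q\le p$ is then clear from the construction. The main obstacle, such as it is, is handling the two cases for $g+h$ (torsion-free $\Q^\a$-part versus non-zero $H$-part) when showing that a suitable $s_{n^q}^q$ exists; everything else is routine bookkeeping against Definition \ref{def:P}.
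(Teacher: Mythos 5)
Your proposal is correct and follows essentially the same route as the paper's proof: keep $\pi^q=\pi^p$, inherit all $U_i^q$ and $s_i^q$ for $i\le n^p$, choose $s_{n^q}^q$ as a suitable multiple of $s_{n^p}^p$ with $g+h\notin\Zet[s_{n^q}^q]^\a$ (the paper phrases this via $\bigcap_{k\in\N^+}\Zet[ks_{n^p}^p]^\a=\{0\}$, which is the same point as your two-case split), and set $U_{n^q}^q=\Zet[s_{n^q}^q]^\a$. All the verifications you sketch, including $U_{n^q}^q+U_{n^q}^q\subseteq U_{n^p}^p$ via divisibility and Remark \ref{subgroups:in:U_i^p}~(ii), match the paper's argument.
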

\begin{proof}
Let 
\begin{itemize}
\item $\pi^q = \pi^p$,
\item 
$n^q = n^p+1$,
\item 
$U^q_i = U^p_i$
and
$s^q_i = s^p_i$
for every $i \in n^p+1$.
\end{itemize}
It remains only to define $U_{n^q}^q$ and $s_{n^q}^q$.

Since $\bigcap_{k\in\N^+} \Zet[ks^p_{n^p}]^\a=\{0\}$ and $g+h\not=0$,
there exists $k\in\N^+$ such that $g+h\not\in\Zet[ks^p_{n^p}]^\a$.
Define $s^q_{n_q}=ks^p_{n^p}$. Since $s_{n^p}^p\in\N^+$ by (3$_p$) and $k\in\N^+$, 
$s^q_{n_q}\in\N^+$ 
and
\begin{equation}
\label{division:eq}
s_{n^p}^p\
 \text{ divides } 
\ 
s_{n^q}^q.
\end{equation}
By our choice of $s^q_{n_q}$, we have
\begin{equation}
\label{eq:g+h}
g+h\notin \Zet[s^q_{n^q}]^\a.
\end{equation}

Finally, we define 
\begin{equation}
\label{eq:def:Un^q}
U_{n^q}^q=\Zet[s^q_{n^q}]^\a.
\end{equation}

\begin{claim}
$q=\ll \pi^q,n^q,\{U_i^q:i\in n^q+1\},\{s_i^q:i\in n^q+1\}\gg\in\P$. 
\end{claim}
\begin{proof}
According to Definition \ref{def:P}~(a), we have to check 
that the structure $q$ satisfies conditions (1$_q$)--(8$_q$).
By our construction, conditions (1$_q$), (2$_q$), (3$_q$) and (8$_q$) hold. 

Let us check conditions (4$_q$)--(7$_q$).
Since conditions (4$_p$)--(7$_p$)
hold and 
$U^q_i = U^p_i$,
$s^q_i = s^p_i$
for every $i \in n^p+1$,
it follows that conditions
(4$_q$)--(6$_q$) hold for each $i \in n^p+1=n^q$
and condition
(7$_q$) 
holds for each $i \in n^p$.
Therefore, it 
remains only to check 
the following four conditions:
\begin{itemize}
\item[(a)]
$0 \in U_{n^q}^q\subseteq (G\cap \Q^\a_{\pi^q}) \Oplus H$, 
\item[(b)]
$-U_{n^q}^q=U_{n^q}^q$, 
\item[(c)]
$U_{n^q}^q+ \Zet[s^q_{n^q}]^\a = U_{n^q}^q$,
\item[(d)] $U_{n_q}^q+U_{n_q}^q\subseteq U_{n_p}^q$.
\end{itemize}
Conditions (b) and (c) are immediate from 
\eqref{eq:def:Un^q}
and the fact that $\Zet[s^q_{n^q}]^\a$ is a subgroup of 
$\Q^\a$.

Clearly, $0\in \Zet[s^q_{n^q}]^\a   \subseteq \Z^\a\subseteq G$
by our assumption on $G$. Furthermore,
$\Zet[s^q_{n^q}]^\a  \subseteq \Q_{\pi^q}^q$ 
by Lemma \ref{basic:subgroups:of:Q}~(ii).
Combining this with 
\eqref{eq:def:Un^q}, we get 
$0\in U_{n_q}^q\subseteq G\cap \Q^\a_{\pi^q}\subseteq (G\cap \Q^\a_{\pi^q}) \Oplus H$.
Thus, (a) holds.

From \eqref{eq:def:Un^q}, we get 
$U_{n_q}^q+U_{n_q}^q= \Zet[s^q_{n^q}]^\a + \Zet[s^q_{n^q}]^\a=
\Zet[s^q_{n^q}]^\a$.
Since $s_{n^p}^p$ divides $s_{n^q}^q$ by \eqref{division:eq}, 
we have the inclusion
$\Zet[s^q_{n^q}]^\a \subseteq \Zet[s^p_{n^p}]^\a$.
Finally, $\Zet[s^p_{n^p}]^\a \subseteq U_{n_p}^q$
by Remark
\ref{subgroups:in:U_i^p}~(ii).
This finishes the check of (d).
\end{proof}

The inequality $q\le p$ is clear from our construction of $q$ and Definition \ref{def:P}~(b).
Finally, from 
\eqref{eq:g+h}
and
\eqref{eq:def:Un^q},
we get 
$g+h\notin U_{n^q}$.
\end{proof}

\section{Density lemmas}
\label{Sec:6}

\begin{definition}
Let $(\P,\le)$ be a poset.
Recall that a set $D\subseteq \P$ is called:
\begin{itemize}
\item[(i)] 
   {\em dense in $(\P,\le)$\/} provided that for every $p\in \P$ there exists $q\in D$ such that $q\le p$;
\item[(ii)]   
   {\em downward-closed in $(\P,\le)$\/} if for every $p \in D$ and $q \in \P$ the inequality $q \leq p$ implies that $q \in D$.
\end{itemize}    
\end{definition}

The relation between these two notions is made apparent by the following straightforward lemma.

\begin{lemma}
\label{downwardclosed:dense:sets}
If $A,B \subseteq \P$ are dense subsets of a poset $(\P,\le)$ and $A$ is downward-closed, then $A \cap B$ is dense in $(\P,\le)$.
\end{lemma}

\begin{lemma}
\label{trivial:dense:sets:i}
For every $n\in\N$, the set $A_n=\{q\in\P: n\le n^q\}$ is dense and downward-closed in $(\P,\le)$.
\end{lemma}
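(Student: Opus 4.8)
The plan is to verify the two properties separately, both by direct appeal to the definitions in Definition~\ref{def:P}. The statement concerns $A_n = \{q \in \P : n \le n^q\}$ for a fixed $n \in \N$.

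\emph{Density.} I would start with an arbitrary $p \in \P$ and produce some $q \in A_n$ with $q \le p$. If $n \le n^p$, then $p$ itself already lies in $A_n$, so there is nothing to do. Otherwise $n^p < n$, and the idea is to enlarge $p$ by iteratively appending extra neighbourhoods until the index reaches $n$. The tool for a single such step is Lemma~\ref{adding:extra:neighbourhood}: given any $r \in \P$, pick any nonzero element $x \in G \cap \Q_{\pi^r}^\a$ (for instance a nonzero element of $\Z^\a \subseteq G \cap \Q^\a_{\pi^r}$, which is available since $\Z^\a \subseteq G$ and $\Z^\a \subseteq \Q^\a_{\pi^r}$ by Lemma~\ref{basic:subgroups:of:Q}(ii)), apply the lemma with $g = x$ and $h = 0$, and obtain $r' \le r$ with $n^{r'} = n^r + 1$. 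Starting from $p$ and applying this step $n - n^p$ times, transitivity of $\le$ gives some $q \le p$ with $n^q = n$, hence $q \in A_n$. This establishes density.

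\emph{Downward-closedness.} Here I would take $p \in A_n$ and $q \in \P$ with $q \le p$, and show $q \in A_n$. By definition of $A_n$ we have $n \le n^p$, and by condition (ii$_q^p$) of Definition~\ref{def:P}(b) we have $n^p \le n^q$; combining, $n \le n^q$, so $q \in A_n$. This is immediate.

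I do not anticipate a serious obstacle: the downward-closed part is a one-line consequence of (ii$_q^p$), and the density part is a routine induction whose only nontrivial ingredient, Lemma~\ref{adding:extra:neighbourhood}, has already been proved. The one small point to be careful about is that Lemma~\ref{adding:extra:neighbourhood} requires a nonzero element $g + h$ of the appropriate subgroup to feed in; choosing $g$ to be any nonzero point of $\Z^\a$ and $h = 0$ handles this uniformly, and the hypothesis $\Z^\a \subseteq G$ built into Definition~\ref{def:P} guarantees such a point exists inside $G \cap \Q^\a_{\pi^p}$.
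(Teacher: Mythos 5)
Your proposal is correct and follows essentially the same route as the paper: density via repeated application of Lemma~\ref{adding:extra:neighbourhood} with a nonzero element of $\Z^\a\subseteq G\cap\Q^\a_{\pi^p}$ and $h=0$ serving as a dummy, and downward-closedness as an immediate consequence of (ii$_q^p$). The only cosmetic difference is that you re-select the nonzero element at each step while the paper fixes it once, which changes nothing.
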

\begin{proof}
Let 
$n\in\N$ and $p\in\P$. 
If $n\le n^p$, then $p\in A_n$, so we shall assume from now on 
that $n^p < n$. 
Then $k=n-n^p\ge 1$.

Note that 
$\Z^\a\subseteq G$ by our assumption on $G$
and $\Z^\a\subseteq \Q_{\pi^p}^\a$ by Lemma
\ref{basic:subgroups:of:Q}~(ii), so
$\Z^\a\subseteq G\cap \Q_{\pi^p}^\a$.
This allows us to fix $g\in G\cap \Q_{\pi^p}^\a$
with $g\neq 0$.
Let $h=0$. Then $g+h=g\not=0$.

Let $q_0=p$. 
By finite induction on $i=1,\dots,k$, we can use 
Lemma \ref{adding:extra:neighbourhood} to find
$q_i\in\P$ such that $q_i\le q_{i-1}$ and $n^{q_i}=n^{q_{i-1}}+1=n^p+i$. (Note that $g$ and $h$ play a ``dummy role'' in this argument; their sole purpose here is to make 
the assumptions of Lemma \ref{adding:extra:neighbourhood} satisfied.)
Now $q_k\le q_{k-1}\le\cdots\le q_1\le q_0=p$ and
$n^{q_k}=n^p+k=n$, so $q_k\in A_n$.
This shows that $A_n$ is dense in $(\P,\le)$.

Finally, given $p \in A_n$ and $q \in \P$ such that $q \leq p$, we have that $n \leq n^p \leq n^q$, thus showing that $q \in A_n$ and therefore, that $A_n$ is downward-closed.
\end{proof}

\begin{lemma}
\label{trivial:dense:sets:ii}
For every $\pi\in[\Prm]^{<\omega}$, the set $B_\pi=\{q\in\P: \pi\subseteq \pi^q\}$ is dense
in $(\P,\le)$.
\end{lemma}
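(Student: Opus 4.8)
The goal is to show that $B_\pi = \{q \in \P : \pi \subseteq \pi^q\}$ is dense in $(\P,\le)$. So I fix an arbitrary $p \in \P$ and must produce $q \le p$ with $\pi \subseteq \pi^q$. Without loss of generality $\pi \not\subseteq \pi^p$ (otherwise $p$ itself works), so I will enlarge the prime set to $\pi^q = \pi^p \cup \pi$. The main point is that having enlarged $\pi^p$, the neighbourhoods $U_i^p$ — which were subsets of $(G \cap \Q^\a_{\pi^p}) \Oplus H$ — must be replaced by subsets $U_i^q$ of the larger set $(G \cap \Q^\a_{\pi^q}) \Oplus H$, in such a way that all the structural conditions $(1_q)$--$(8_q)$ hold and, crucially, the compatibility condition $(\mathrm{iii}_q^p)$: $U_i^q \cap (\Q^\a_{\pi^p} \Oplus H) = U_i^p$ for every $i \in n^p+1$.

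The natural candidate is to keep $n^q = n^p$, $s_i^q = s_i^p$, and simply set $U_i^q = U_i^p + \Zet[s_i^p]^\a$? No — that changes nothing since $U_i^p$ already absorbs $\Zet[s_i^p]^\a$ by $(6_p)$. Instead the right move is to add to $U_i^p$ just enough new torsion-free "mass" coming from $\Q^\a_{\pi^q} \setminus \Q^\a_{\pi^p}$ to make $U_i^q$ a legitimate neighbourhood base entry over the larger prime set, while ensuring the intersection with the old ambient group recovers exactly $U_i^p$. The cleanest choice is $U_i^q := U_i^p + \langle X_i \rangle$ for suitable subgroups $X_i \subseteq \Q^\a_{\pi^q}$ that are "invisible" from $\Q^\a_{\pi^p}$; the simplest is to take $X_i = \Zet[s_i^p]^\a$ scaled by denominators from $\pi \setminus \pi^p$ — but one must verify $(7_q)$, i.e. $U_{i+1}^q + U_{i+1}^q \subseteq U_i^q$, which constrains how the $X_i$ can grow as $i$ decreases, matching the divisibility chain $(8_p)$. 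I would define $X_i$ as $\Zet[r_i]^\a$ where $r_i = s_i^p$ divided by the $\pi$-part, or more carefully pick a single integer so that $X_i + X_i \subseteq X_{i-1}$ holds in parallel with $s_i^p \mid s_{i+1}^p$.

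Concretely: let $N$ be the product of the primes in $\pi$, and for each $i \in n^p+1$ set $U_i^q = U_i^p + \frac{1}{N^{i}}\Zet[s_i^p]^\a$ (intersected back inside $(G \cap \Q^\a_{\pi^q})\Oplus H$, which is automatic since $\Z^\a \subseteq G$ and $\frac{1}{N}\Z \subseteq \Q_\pi \subseteq \Q_{\pi^q}$). Then $(4_q)$--$(6_q)$ are immediate ($U_i^q$ is still symmetric and still absorbs $\Zet[s_i^p]^\a$); $(7_q)$ follows because $U_{i+1}^p + U_{i+1}^p \subseteq U_i^p$ by $(7_p)$ and $\frac{2}{N^{i+1}}\Zet[s_{i+1}^p]^\a \subseteq \frac{1}{N^i}\Zet[s_i^p]^\a$ since $s_i^p \mid s_{i+1}^p$ and $N \ge 2$ forces $\frac{2 s_i^p}{N^{i+1} s_{i+1}^p} \cdot N^i = \frac{2 s_i^p}{N s_{i+1}^p}$ to be an integer multiple appropriately — I would check this divisibility carefully, possibly adjusting the exponent. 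For $(\mathrm{iii}_q^p)$: an element of $U_i^q$ lies in $\Q^\a_{\pi^p} \Oplus H$ iff its $\frac{1}{N^i}\Zet[s_i^p]^\a$-component, after subtracting something in $U_i^p \subseteq \Q^\a_{\pi^p}\Oplus H$, lies in $\Q^\a_{\pi^p}$; since the denominators $N$ involve primes outside $\pi^p$ (after replacing $\pi$ by $\pi \setminus \pi^p$, which is harmless), the only way is for that component to be in $\Z^\a \cdot s_i^p \subseteq U_i^p$, giving $U_i^q \cap (\Q^\a_{\pi^p}\Oplus H) = U_i^p$.

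**Main obstacle.** The delicate step is $(\mathrm{iii}_q^p)$, the exact-intersection condition, together with making $(7_q)$ survive the enlargement. If $\pi$ contains primes already in $\pi^p$, adding $\frac{1}{N^i}$-denominators does not create anything genuinely new and the intersection argument collapses; the fix is to first shrink to $\pi' = \pi \setminus \pi^p$ and enlarge by that, using only genuinely new primes. I also need to double-check that the set $B_\pi$ need not be downward-closed — and indeed it is not claimed to be — so I only owe density, which is exactly the construction above. Once the arithmetic of the denominators versus the chain $s_i^p \mid s_{i+1}^p$ is pinned down, conditions $(1_q)$--$(8_q)$ and $(\mathrm{i}_q^p)$--$(\mathrm{iv}_q^p)$ all follow routinely, and $q \le p$ with $\pi \subseteq \pi^q$, completing the proof.
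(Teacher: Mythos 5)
Your starting premise---that after enlarging the prime set you ``must'' replace the $U_i^p$ by genuinely larger sets containing new elements from $\Q^\a_{\pi^q}\setminus\Q^\a_{\pi^p}$---is a misreading of Definition \ref{def:P}. Condition (4$_q$) is only an \emph{upper} bound ($U_i^q\subseteq (G\cap\Q^\a_{\pi^q})\Oplus H$); nothing forces $U_i^q$ to use the new primes, and condition (iii$_q^p$) is satisfied \emph{trivially} by taking $U_i^q=U_i^p$, since $U_i^p\subseteq\Q^\a_{\pi^p}\Oplus H$ already holds by (4$_p$), so $U_i^p\cap(\Q^\a_{\pi^p}\Oplus H)=U_i^p$. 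That is exactly the paper's proof: set $\pi^q=\pi^p\cup\pi$, $n^q=n^p$, $s_i^q=s_i^p$, $U_i^q=U_i^p$; conditions (1$_q$)--(8$_q$) carry over verbatim (using $\Q^\a_{\pi^p}\subseteq\Q^\a_{\pi^q}$ from Lemma \ref{basic:subgroups:of:Q}(iii)) and (i$_q^p$)--(iv$_q^p$) are immediate. You explicitly considered ``keep everything the same'' and rejected it on the grounds that ``that changes nothing''---but changing nothing is precisely what is wanted here; density only asks for some $q\le p$ with $\pi\subseteq\pi^q$.

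Beyond being unnecessary, your concrete construction $U_i^q=U_i^p+\frac{1}{N^i}\Zet[s_i^p]^\a$ does not satisfy the axioms. First, (4$_q$) fails in general: $\frac{s_i^p}{N^i}\Z^\a$ need not be contained in $G$, since $G$ is only assumed to be a subgroup of $\Q^\a$ containing $\Z^\a$ (in this section it need not even be wide, and even wideness would not give $\frac{1}{N}\Z^\a\subseteq G$); your claim that containment in $(G\cap\Q^\a_{\pi^q})\Oplus H$ is ``automatic'' only addresses the $\Q^\a_{\pi^q}$ part, not membership in $G$. Second, (7$_q$) requires $\frac{s_{i+1}^p}{N^{i+1}}\Z^\a\subseteq\frac{s_i^p}{N^i}\Z^\a$, i.e.\ $Ns_i^p\mid s_{i+1}^p$, which does not follow from (8$_p$) ($s_i^p\mid s_{i+1}^p$); with your exponents the added groups shrink as $i$ decreases, which is the wrong direction, and ``adjusting the exponent'' does not repair the failure of (4$_q$). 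So the proposal as written is not a proof; the correct argument is the one-line extension above.
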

\begin{proof}
Let
$\pi \in [\Prm]^{<\omega}$ and $p\in\P$. Define

\begin{itemize}
\item $\pi^q = \pi^p \cup \pi$,
\item $n^q = n^p$,
\item $s^q_i = s^p_i$ for every $i \in n^p+1$; and 
\item $U^q_i = U^p_i$ for every $i \in n^p+1$.
\end{itemize}

A straightforward check using Definition \ref{def:P}~(a) shows that
$$
q=\ll
 \pi^q,n^q,\{U_i^q:i\in n^q+1\},\{s_i^q:i\in n^q+1\}
\gg\in \P. 
$$
From our definition of $q$ and Definition \ref{def:P}~(b), one easily 
concludes that $q\le p$.
Clearly, $q\in B_\pi$.
This shows that
$B_\pi$ is dense in $(\P,\le)$. 
\end{proof}

\begin{lemma}
\label{trivial:dense:sets:iii}
The set
$C_{g+h}=\{q\in \P: g+h \in (\Q_{\pi^q}^\a \Oplus H )\setminus U^q_{n^q}\}$  is dense in $(\P,\le)$
whenever $g\in G$, $h\in H$ and $g+h\not=0$.
\end{lemma}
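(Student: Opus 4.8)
The plan is to fix $p \in \P$ and produce $q \le p$ with $q \in C_{g+h}$. The obstruction is that the element $g+h$ may fail to lie in $\Q^\a_{\pi^p} \Oplus H$ in the first place; that is, the rational coordinates of $g$ may involve primes outside $\pi^p$. So the first step is to enlarge $\pi^p$: choose a finite set $\varpi \in [\Prm]^{<\omega}$ large enough that $g \in \Q^\a_{\varpi}$ (only finitely many primes occur in the denominators of the coordinates of $g$), and let $\pi' = \pi^p \cup \varpi$. By Lemma \ref{trivial:dense:sets:ii}, $B_{\pi'}$ is dense, so fix $r \le p$ with $\pi' \subseteq \pi^r$; then $g \in \Q^\a_{\pi^r}$ by Lemma \ref{basic:subgroups:of:Q}~(iii), hence $g \in G \cap \Q^\a_{\pi^r}$ and $g+h \in (G \cap \Q^\a_{\pi^r}) \Oplus H \subseteq \Q^\a_{\pi^r} \Oplus H$.

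Next, I would apply Lemma \ref{adding:extra:neighbourhood} to $r$ with this $g$ and $h$: since $g \in G \cap \Q^\a_{\pi^r}$ and $g+h \ne 0$, there is $q \le r$ with $n^q = n^r + 1$ and $g+h \notin U^q_{n^q}$. It remains to check that $g+h$ still lies in $\Q^\a_{\pi^q} \Oplus H$; but $\pi^r \subseteq \pi^q$ by (i$^r_q$), so $\Q^\a_{\pi^r} \subseteq \Q^\a_{\pi^q}$ by Lemma \ref{basic:subgroups:of:Q}~(iii), and therefore $g+h \in \Q^\a_{\pi^r} \Oplus H \subseteq \Q^\a_{\pi^q} \Oplus H$. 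Combined with $g+h \notin U^q_{n^q}$, this gives $g+h \in (\Q^\a_{\pi^q} \Oplus H) \setminus U^q_{n^q}$, i.e.\ $q \in C_{g+h}$. Since $q \le r \le p$, we have produced the required extension of $p$, establishing density of $C_{g+h}$.

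There is essentially no hard part here: the only subtlety — and the reason the two-step argument is needed rather than a direct appeal to Lemma \ref{adding:extra:neighbourhood} — is that the membership $g+h \in \Q^\a_{\pi^p} \Oplus H$ required by the hypotheses of Lemma \ref{adding:extra:neighbourhood} (via the condition $g \in G \cap \Q^\a_{\pi_p}^\a$) need not hold for the given $p$, so one first forces $\pi^p$ to absorb the finitely many primes appearing in $g$. Everything else is bookkeeping with the monotonicity of $\pi \mapsto \Q^\a_\pi$.
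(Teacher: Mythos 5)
Your proof is correct and follows essentially the same route as the paper: first use the density of $B_\pi$ (Lemma \ref{trivial:dense:sets:ii}) to pass to a condition whose $\pi$-component absorbs the finitely many primes occurring in $g$, then apply Lemma \ref{adding:extra:neighbourhood} and use the monotonicity of $\pi\mapsto\Q^\a_\pi$ together with (i$_q^p$) to confirm $g+h\in(\Q^\a_{\pi^q}\Oplus H)\setminus U^q_{n^q}$. The only cosmetic difference is that you enlarge to $\pi^p\cup\varpi$ explicitly, whereas the paper simply extends below an element of $B_\pi$ for a $\pi$ with $g\in\Q^\a_\pi$; the arguments are otherwise identical.
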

\begin{proof}
Fix $g\in G$ and $h\in H$ such that $g+h\not=0$.
Let $r\in\P$ be arbitrary.
Since $g\in G\subseteq \Q^\a$, there exists 
$\pi\in[\Prm]^{<\omega}$ such that
$g\in\Q_{\pi}^\a$. 
Since $B_\pi$ is dense in $(\P,\le)$ 
by Lemma \ref{trivial:dense:sets:ii}, 
there exists $p\in B_\pi$ such that $p\le r$. 
Now $\pi\subseteq \pi^p$ by definition of $B_\pi$,
so $\Q_{\pi}^\a\subseteq \Q_{\pi^p}^\a$
by
Lemma \ref{basic:subgroups:of:Q}~(iii).
Since $g\in\Q_{\pi}^\a$, we also have 
$g\in \Q_{\pi^p}^\a$.
We have checked that
$g\in G\cap \Q_{\pi^p}^\a$.
Applying Lemma \ref{adding:extra:neighbourhood}, we can 
find $q\in \P$ such that $q\le p\le r$
and
$g+h\notin U_{n^q}$.
Since $q\le p$, we have $\pi^p\subseteq \pi^q$ by (i$_q^p$),
so
$\Q_{\pi^p}^\a\subseteq \Q_{\pi^q}^\a$
by
Lemma \ref{basic:subgroups:of:Q}~(iii).
Since $g\in\Q_{\pi^p}^\a$, we also have 
$g\in \Q_{\pi^q}^\a$.
Therefore, $g+h\in \Q_{\pi^q}^\a \Oplus H$.
We have proved that $g+h\in (\Q_{\pi^q}^\a \Oplus H )\setminus U^q_{n^q}$, so $q\in C_{g+h}$.
Since $q\le r$, this shows that
$C_{g+h}$  is dense in $(\P,\le)$.
\end{proof}

\begin{lemma}
\label{ssgp:density}
If $G$ is a wide subgroup of $\Q^\a$, then
 the set 
\begin{equation}
\label{def:D_g}
D_{g+h}=\left\{q\in\P: g+h\in \bigcup_{k \in \N^+} U_{n^q}^q + \langle \Cyc(U_{n^q}^q)\rangle_k\right\}
\end{equation}
 is dense in $(\P,\le)$ for all $g\in G$ and $h\in H$.
 \end{lemma}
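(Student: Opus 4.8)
The goal is to show that, given $p \in \P$ and elements $g \in G$, $h \in H$, we can find $q \le p$ with $g + h \in \bigcup_{k \in \N^+} U_{n^q}^q + \langle \Cyc(U_{n^q}^q) \rangle_k$. If $g + h = 0$ then any $q \le p$ works since $0 \in U_{n^q}^q$, so assume $g + h \ne 0$. The strategy is to enlarge $p$ so that $g+h$ lands in a controlled coset, and then to manufacture enough \emph{cyclic} subgroups sitting inside the new last neighbourhood $U_{n^q}^q$ that their bounded-length product hull captures $g+h$ (modulo $U_{n^q}^q$ itself). The key idea is the splitting $g = g_0 + \sum_{j=1}^k g_j$ from \eqref{eq:g_0}: if the $g_j$ are chosen so that each $\grp{g_j} \subseteq U_{n^q}^q$ and the ``remainder'' $g_0 + h$ lies in $U_{n^q}^q$, then $g + h = (g_0 + h) + \sum_{j=1}^k g_j \in U_{n^q}^q + \langle \Cyc(U_{n^q}^q) \rangle_k$, since each $g_j \in \Cyc(U_{n^q}^q)$.

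\textbf{Carrying it out.} First, since $A_0 \cap B_\pi$-type density lemmas are available, pass to some $p' \le p$ with $g \in G \cap \Q^\a_{\pi^{p'}}$ (choose $\pi$ with $g \in \Q^\a_\pi$, then use Lemma \ref{trivial:dense:sets:ii}; Lemma \ref{basic:subgroups:of:Q}(iii) gives $g \in \Q^\a_{\pi^{p'}}$). Now invoke Lemma \ref{finding:a:sequence:of:g's} with the wide subgroup $G$, starting set $\pi_0 = \pi^{p'}$, a suitable $k \in \N^+$, and $s = s^{p'}_{n^{p'}}$: this produces an increasing chain $\pi_0 \subseteq \pi_1 \subseteq \dots \subseteq \pi_k$ and elements $g_1, \dots, g_k \in G$ satisfying the hypotheses (a$_j$), (b$_j$), (c$_j$) of Lemma \ref{iterative:lemma}. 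Set $g_0 = g - \sum_{j=1}^k g_j$ as in \eqref{eq:g_0}. The plan is then to build $q$ with $\pi^q = \pi_k$, $n^q = n^{p'} + 1$ (or a slightly larger increment), extending each $U^{p'}_i$ to $U^q_i = $ the subgroup generated appropriately inside $\Q^\a_{\pi^q} \oplus H$, and defining the new last term $U^q_{n^q}$ to be a subgroup large enough to contain each $\grp{g_j}$ together with $\Zet[s^q_{n^q}]^\a$ and the element $g_0 + h$, yet small enough that conditions (6$_p$)--(8$_p$) and the compatibility conditions (iii$^p_q$), (iv$^p_q$) of Definition \ref{def:P}(b) survive. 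Lemma \ref{iterative:lemma}(ii) (with $i=1$) guarantees $\grp{\{g_1,\dots,g_k\}} \cap \Q^\a_{\pi_0} \subseteq \Zet[s]^\a$, which is exactly what is needed to check that intersecting the enlarged $U^q_i$ back down with $\Q^\a_{\pi^{p'}} \oplus H$ returns $U^{p'}_i$ — i.e. that $q \le p'$. Lemma \ref{iterative:lemma}(B) is the tool that prevents unwanted collapse: it ensures the $g_j$'s are ``independent enough'' modulo $\Q^\a_{\pi_0}$ that adding the subgroup $\grp{g_0}$ into the last neighbourhood does not drag extra elements of $\Q^\a_{\pi^{p'}} \oplus H$ into $U^q_{n^q}$, preserving (iii$^{p'}_q$) at level $n^{p'}$.

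\textbf{Finishing.} Once such a $q \in \P$ with $q \le p$ is constructed, observe that by design $\grp{g_j} \subseteq U^q_{n^q}$ for each $j = 1, \dots, k$, so $g_j \in \Cyc(U^q_{n^q})$, whence $\sum_{j=1}^k g_j \in \langle \Cyc(U^q_{n^q}) \rangle_k$. Also $g_0 + h \in U^q_{n^q}$ by construction. Therefore
\[
g + h = (g_0 + h) + \sum_{j=1}^k g_j \in U^q_{n^q} + \langle \Cyc(U^q_{n^q}) \rangle_k \subseteq \bigcup_{k \in \N^+} U^q_{n^q} + \langle \Cyc(U^q_{n^q}) \rangle_k,
\]
so $q \in D_{g+h}$, and since $q \le p$ with $p$ arbitrary, $D_{g+h}$ is dense.

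\textbf{The main obstacle.} The delicate point is the \emph{definition of the enlarged neighbourhoods} $U^q_i$ for $i \le n^q$, and especially $U^q_{n^q}$: it must simultaneously (a) be a subset of $(G \cap \Q^\a_{\pi^q}) \oplus H$ containing $0$, symmetric, and $\Zet[s^q_{n^q}]^\a$-invariant; (b) satisfy the nesting $U^q_{i+1} + U^q_{i+1} \subseteq U^q_i$; (c) restrict correctly: $U^q_i \cap (\Q^\a_{\pi^{p'}} \oplus H) = U^{p'}_i$; and (d) actually contain $\grp{g_1}, \dots, \grp{g_k}$ and $g_0 + h$. Balancing (c) against (d) is where Lemma \ref{iterative:lemma} — built precisely for this purpose — does the heavy lifting, and verifying (c) rigorously (using parts (ii) and (B) of that lemma to control the intersection of the generated subgroup with $\Q^\a_{\pi^{p'}}$) is the technically heaviest part of the argument; one likely needs to spread the new generators across several successive levels $n^{p'}+1, \dots, n^{p'}+k$ rather than cramming them into one, using the nesting $\pi_0 \subseteq \dots \subseteq \pi_k$ and condition (7$_p$) to keep the whole tower consistent.
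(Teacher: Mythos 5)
Your high-level strategy is the same as the paper's: normalize $p$ so that $g\in\Q^\a_{\pi^p}$ using Lemma \ref{trivial:dense:sets:ii}, decompose $g=g_0+\sum_{j=1}^k g_j$ via Lemmas \ref{finding:a:sequence:of:g's} and \ref{iterative:lemma}, put $g_0+h$ and the cyclic groups $\grp{g_j}$ into the top neighbourhood of a new condition $q$, and verify (iii$_q^p$) with parts (A)(ii) and (B) of Lemma \ref{iterative:lemma}. However, the construction of $q$ itself --- which you correctly identify as the main obstacle --- is left unresolved, and the concrete choices you do propose would fail. First, you take $U^q_{n^q}$ to be a \emph{subgroup} containing $g_0+h$; then $\grp{g_0+h}\subseteq U^q_{n^q}$, and condition (7$_q$) pushes every multiple $l(g_0+h)$ down into the levels $i\le n^p$ where (iii$_q^p$) must hold. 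But the protection supplied by (c$_j$) and Lemma \ref{iterative:lemma}(B) only covers $|l|\le k$: for $l$ a common multiple of the prime powers occurring in the denominators of the $g_j$ one gets $lg_j\in\Zet[s]^\a$ for all $j$, hence $lg_0=lg-\sum_j lg_j\in\Q^\a_{\pi^p}$, so $l(g_0+h)\in\Q^\a_{\pi^p}\Oplus H$ with no reason to lie in $U^p_i$, and (iii$_q^p$) breaks. This is exactly why the paper's definition \eqref{eq:2} adds only $\{\pm(g_0+h)\}+\Zet[s^p_{n^p}]^\a$, never the cyclic group generated by $g_0+h$. Second, replacing the lower-level sets by ``the subgroup generated appropriately'' cannot satisfy (iii$_q^p$) either: $U^p_i$ need not be a subgroup, while the intersection of a subgroup with $\Q^\a_{\pi^p}\Oplus H$ is one, so equality with $U^p_i$ would fail in general. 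The paper's $U^q_i$ are unions, not subgroups.

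The second missing ingredient is the quantitative bookkeeping hidden in your phrase ``a suitable $k$.'' The paper keeps $n^q=n^p$ (no new level), sets $U^q_{n^p}=U^p_{n^p}\cup\bigl((\{\pm(g_0+h)\}\cup\bigcup_{j=1}^k\grp{g_j})+\Zet[s^p_{n^p}]^\a\bigr)$, and propagates downward by $U^q_i=U^p_i\cup(U^q_{i+1}+U^q_{i+1}+\Zet[s^p_i]^\a)$. Descending through the $n^p$ levels doubles, at each step, the number of occurrences of $g_0+h$ and of distinct generators $g_j$ that can appear; the key Claim \ref{claim:1} records the bound $2^{n^q-i}$ on both, and the choice $k=2^{n^p}+1$ is made precisely so that at every level $|l|\le k$ and the index set $J$ is a \emph{proper} subset of $\{1,\dots,k\}$, which are the hypotheses of Lemma \ref{iterative:lemma}(B) (and the reason (c$_j$) is stated only for $|m|\le k$). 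Without fixing $k$ in terms of $n^p$ and proving such a multiplicity bound, the verification of $q\le p$ --- the technical heart of the lemma --- does not go through, so as written the proposal has a genuine gap rather than merely omitted routine details.
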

\begin{proof}
Let $g\in G$, $h\in H$ and $p\in \P$ be arbitrary. 
By Lemma \ref{trivial:dense:sets:ii},
we may assume, without loss of generality, that 
$g \in \Q_{\pi^p}$.
Define 
\begin{equation}
\label{eq:k}
k=2^{n^p}+1.
\end{equation}

Applying Lemma \ref{finding:a:sequence:of:g's}
to $\pi_0=\pi^p$ and $s=s_{n^p}^p$, 
we can find 
an increasing sequence
\begin{equation}
\label{eq:pi:modified}
\pi^p=\pi_0\subseteq \pi_1\subseteq \pi_2\subseteq\dots\subseteq \pi_k=\pi^q
\end{equation}
of finite subsets of $\Prm$ and elements
$g_1,\dots,g_k\in G$ 
such that 
conditions (a$_j$), (b$_j$), (c$_j$)
of Lemma 
\ref{iterative:lemma} (in which we let $s=s_{n^p}^p$) hold for every $j=1,\dots,k$.

Define $g_0$ as in \eqref{eq:g_0}.
Since $G$ is a subgroup of $\Q$, $g\in\Q_{\pi_0}$ and (a$_j$)
holds for every $j=1,\dots,k$, 
it follows from
\eqref{eq:pi}  and \eqref{eq:g_0} that
\begin{equation}
\label{all:gs}
\{g_j:j\in k+1\}\subseteq \Q^\a_{\pi^q}.
\end{equation}

Let
\begin{equation}
\label{eq:2}
U_{n^p}^q=U_{n^p}^p\cup\left(\left(\{g_0+h\}\cup\{-(g_0+h)\}\cup\bigcup_{j=1}^k \grp{g_j}\right)+ \Zet[s^p_{n^p}]^\a \right).
\end{equation}

By finite reverse induction, we define
\begin{equation}
\label{eq:3}
U_i^q
=
U_i^p\cup
(U_{i+1}^q+U_{i+1}^q+\Zet[s^p_i]^\a) 
\end{equation}
for every $i=n^p-1,\dots,0$.

Let $n^q=n^p$ and $s_i^q=s_i^p$ for every $i\in n^q+1=n^p+1$.

\begin{claim}
\label{claim:1a}
$q=\ll \pi^q,n^q,\{U_i^q:i\in n^q+1\}, \{s_i^q:i\in n^q+1\}\gg\in \P$.
\end{claim}
\begin{proof}
We need to check conditions (1$_q$)--(8$_q$).

Conditions (1$_q$), (2$_q$) and (3$_q$) are trivial.

(4$_q$)
By 
(4$_p$) and \eqref{eq:3},
$0\in U_i^p\subseteq  U_i^q$ for every $i\in n^q+1=n^p+1$.
By (4$_p$), $U_i^p\subseteq (G\cap \Q^\a_{\pi^p}) \Oplus H$ for every $i\in n^q+1=n^p+1$.
Since $G$ is a subgroup of $\Q^\a$, from this, 
$g_1,\dots,g_k\in G$
and 
\eqref{eq:2},
it follows that $U_{n^q}^q=U_{n^p}^q\subseteq G \Oplus H$.
Furthermore,
\eqref{all:gs} and \eqref{eq:2} imply that 
$U_{n^q}^q=U_{n^p}^q\subseteq \Q^\a_{\pi_q} \Oplus H$.
Thus, $U_{n^q}^q\subseteq (G\cap \Q^\a_{\pi^q}) \Oplus H$.
Since $(G\cap \Q^\a_{\pi^q}) \Oplus H$ is a subgroup of $\Q^\a \Oplus H$,
from this, \eqref{eq:3} and finite reverse induction on
$i=n^q, n^q-1,\dots,0$, one concludes that 
 $U_{i}^q\subseteq (G\cap \Q^\a_{\pi^q}) \Oplus H$ for every $i=n^q+1$.
 
(5$_q$)
From
\eqref{eq:2} and (5$_p$), we get
$-U_{n^p}=U_{n^p}$.
Starting with this and using 
\eqref{eq:3}, (5$_p$) and finite reverse induction on
$i=n^q, n^q-1,\dots,0$, one concludes that 
$-U_i^q=U_i^q$
 for every $i=n^q+1$.

(6$_q$)
The equation $U_{n^p}^q+\Zet[s^p_{n^p}]^\a =U_{n^p}^q$
follows from 
\eqref{eq:2} and (6$_p$).
Starting with this and using 
\eqref{eq:3}, (6$_p$) and finite reverse induction on
$i=n^p, n^p-1,\dots,0$, one concludes that 
$U_{i}^q+ \Zet[s^p_i]^\a  =U_{i}^q$
 for every $i=n^p+1$.
 Since $n^q=n^p$ and $s_{i}^q=s_i^p$ for every $i\in n^q+1$,
 this establishes (6$_q$).
 
(7$_q$) follows from \eqref{eq:3}.

(8$_q$) follows from (8$_p$), as  $s_{i}^q=s_i^p$ for every $i\in n^q+1$.
\end{proof}

\begin{claim}
\label{claim:2}
$q\in D_{g+h}$.
\end{claim}
\begin{proof}
It follows from \eqref{eq:2} that 
$g_0+h\in U_{n^q}^q$ and $g_1,\dots,g_k\in \Cyc(U_{n^q}^q)$.
Furthermore,
$g= g_0+ \sum_{i=1}^k g_i$
by \eqref{eq:g_0}. Thus, 
$$g + h =
g_0+ h+ \sum_{i=1}^k g_i
\in U_{n^q}^q + \langle \Cyc(U_{n^q}^q)\rangle_k
$$ 
by 
Definition \ref{def:bounded:envelope},
so $q\in D_{g+h}$ by
\eqref{def:D_g}.
\end{proof}

Our final goal is to prove the inequality 
$q\le p$.
Before establishing this, we need to check two auxiliary facts.

\begin{claim}
\label{claim:1}
The inclusion
\begin{equation}
\label{eq:inlcusion}
U^q_i\subseteq 
\bigcup\left\{U^p_{i}+l (g_0+h)+\sum_{j\in J} \grp{g_j}+\Zet[s^p_i]^\a: l\in\Z, |l|\le 2^{n^q-i},
J\in\{1,2,\dots,k\}^{\le 2^{n^q-i}}\right\}
\end{equation}
holds
for every $i\in n^p+1$.
\end{claim}
\begin{proof}
This is proved by reverse induction on $i=n^p, n^p-1,\dots,0$.
For $i=n^p$ this follows from \eqref{eq:2}.

Let us make an inductive step. Suppose that $i\in n^p$ and we have already 
proved the inclusion \eqref{eq:inlcusion} for $i+1$; that is, we have shown that the inclusion
\begin{equation}
\label{eq:inclusion:i+1}
U^q_{i+1}\subseteq 
\bigcup\left\{U^p_{i+1}+l (g_0+h)+\sum_{j\in J} \grp{g_j}+\Zet[s^p_{i+1}]^\a: l\in\Z, |l|\le 2^{n^q-i-1},
J\in\{1,2,\dots,k\}^{\le 2^{n^q-i-1}}\right\}
\end{equation}
holds. We are going to show that the inclusion \eqref{eq:inlcusion}
holds as well.

Let $x\in U_i^q$. By equation 
\eqref{eq:3}, either $x\in U_i^p$ or $x \in U_{i+1}^q+U_{i+1}^q+ \Zet[s^p_i]^\a$.
In the former case, 
$x=x+0 (g_0+h)+0+0$ is the decomposition 
witnessing that $x$ belongs to the right-hand side of 
\eqref{eq:inlcusion}.
 In the latter case,
 $x= y_0+y_1+s_i^p w$, where $w\in\Z^\a$ and $y_t\in U_{i+1}^q$ for $t=0,1$.
 Applying \eqref{eq:inclusion:i+1}, we can find
 $u_t\in U^p_{i+1}$, $l_t\in\Z$, $J_t\in \{1,2,\dots,k\}^{\le 2^{n^q-i-1}}$, $b_t\in \sum_{j\in J_t} \grp{g_j}$ and $z_t\in\Z^\a$ for $t=0,1$ such that $|l_t|\le 2^{n^q-i-1}$ and
$y_t=u_t+l_t (g_0 + h)+b_t+s_{i+1}^p z_t$.
Since $x=y_0+y_1+s_i^p w$, it follows 
that $x=u+l (g_0+h)+b+s_{i+1}^p z +s_i^p w$,
where $u=u_0+u_1$, $l=l_0+l_1$, $b=b_0+b_1$ and $z=z_0+z_1$.
Since $s_{i}^p$ divides $s_{i+1}^p$ by (8$_p$),
$s_{i+1}^p=s_i^p k_0$ for some $k_0\in\Z$, so
\begin{equation}
\label{eq:x}
x=u+l (g_0+h)+b+s_{i}^p (k_0z+w).
\end{equation}

Since $u_t\in U_{i+1}^p$ for $t=0,1$,
$u=u_0+u_1\in U_{i+1}^p+U_{i+1}^p\subseteq U_i^p$ by (7$_p$).
Since $l_t\in\Z$ and $|l_t|\le 2^{n^q-i-1}$  for $t=0,1$, we have 
$l=l_0+l_1\in\Z$ and 
$$
|l|\le |l_0|+|l_1|\le 2^{n^q-i-1}+2^{n^q-i-1}=2\cdot2^{n^q-i-1}=2^{n^q-i}.
$$
Since $J_t\in \{1,2,\dots,k\}^{\le 2^{n^q-i-1}}$  for $t=0,1$,
the set $J=J_0\cup J_1$ satisfies
$$
|J|\le |J_1|+|J_2|\le 2^{n^q-i-1}+2^{n^q-i-1}=2\cdot2^{n^q-i-1}=2^{n^q-i},
$$
 so $J\in \{1,2,\dots,k\}^{\le 2^{n^q-i}}$.
Since 
$b_t\in \sum_{j\in J_t} \grp{g_j}$   for $t=0,1$,
we have $b=b_0+b_1\in \sum_{j\in J} \grp{
g_j}$.
Since $z_t\in\Z^\a$ for $t=0,1$,
we have $z=z_0+z_1\in \Z^\a$.
Finally, since $k_0\in\Z$, we get $k_0 z+w \in\Z^\a$ and thus $s^p_i(k_0 z+ w) \in \Zet[s^p_i]^\a$.
Combining all this with
\eqref{eq:x}, we conclude that 
$$
x\in U^p_{i}+l (g_0+h)+\sum_{j\in J} \grp{g_j}+ \Zet[s^p_i]^\a
$$
with $l\in\Z$, $J\in\{1,2,\dots,k\}^{\le 2^{n^q-i}}$
and 
$|l|\le 2^{n^q-i}$.
This finishes the proof of the inclusion
\eqref{eq:inlcusion}.
\end{proof}

\begin{claim}
\label{claim:3}
$U_i^q\cap (\Q_{\pi^p}^\a \Oplus H)=U_i^p$ for every $i\in n^p+1$.
\end{claim}
\begin{proof} 
It follows from \eqref{eq:3} that
$U_i^p\subseteq U_i^q$.
Since $U_i^p\subseteq \Q_{\pi^p}^\a \Oplus H$ by (4$_p$), 
this establishes the inclusion
$U_i^p\subseteq U_i^q\cap (\Q_{\pi^p}^\a \Oplus H)$.

To prove the reverse inclusion, we fix an arbitrary $x\in U_i^q\cap (\Q^\a_{\pi^p} \Oplus H)$ and we are going to show that $x\in U_i^p$. 
Since $x\in U_i^q$, we can apply 
Claim \ref{claim:1} to fix
$y\in U_i^p$, $l\in\Z$, a finite set 
$J\subseteq \{1,2,\dots,k\}$, a family 
$\{m_j:j\in J\}\subseteq \Z$ and $z\in\Z^\a$ 
such that $|l|\le 2^{n^q-i}$, $|J|\le 2^{n^q-i}$ and 
\begin{equation}
\label{eq:23}
x=y+l (g_0+h)+\sum_{j\in J} m_j g_j+ s_i^p z.
\end{equation} 
Recall that $x\in \Q^\a_{\pi^p} \Oplus H$.
Furthermore,
$y\in U_i^p\subseteq \Q^\a_{\pi^p} \Oplus H$
by (4$_p$).
Finally,
$s_i^p z \in\Z^\a\subseteq \Q_{\pi^p}^\a$.
Since $\pi_0=\pi^p$ by \eqref{eq:pi:modified}
and $\Q_{\pi_0}^\a$ is a group, from \eqref{eq:23} we get 
\begin{equation}
\label{eq:99}
l (g_0+h)+\sum_{j\in J} m_j g_j=x-y-s_i^p z \in \Q^\a_{\pi_0} \Oplus H,
\end{equation}
so
\begin{equation}
\label{eq:45:f}
l g_0=-h-\sum_{j\in J} m_j g_j+\Q^\a_{\pi_0} \Oplus H
\in \grp{\{g_j:j\in J\}} + \Q^\a_{\pi_0} \Oplus H.
\end{equation}
Since $\{g_0\}\cup\{g_j:j\in J\}\subseteq \Q^\a$,
$\Q_{\pi_0}^\a\subseteq\Q^\a$, $\Q^\a$ is a group and 
the sum $\Q^\a+H=\Q^\a\oplus H$ is direct,
from the inclusion \eqref{eq:45:f} we obtain the stricter inclusion
$l g_0 \in \grp{\{g_j:j\in J\}} + \Q^\a_{\pi_0}$.

Since $|J|\le 2^{n^q-i}\le 2^{n^q}=2^{n^p}<k$
by \eqref{eq:k}, $J$ is a proper subset of the set $\{1,2,\dots,k\}$.
Now all the assumptions of Lemma \ref{iterative:lemma}
are satisfied (with $s=s_{n^p}^p$), so from item (B) of
this lemma we conclude that $l = 0$. 
From this and
\eqref{eq:99}, we obtain 
$\sum_{j\in J} m_j g_j\in \Q^\a_{\pi_0} \Oplus H$,
so we can fix $a\in \Q^\a_{\pi_0}$ and $b\in H$
satisfying
$\sum_{j\in J} m_j g_j=a+b$.
Since $\Q^\a_{\pi^q}$ is a subgroup of $\Q^\a$, from 
\eqref{all:gs} we get
$\sum_{j\in J} m_j g_j\in\Q^\a_{\pi^q}$.
Therefore,
$$
b=(-a)+\sum_{j\in J} m_j g_j\in \Q^\a_{\pi_0}+\Q^\a_{\pi^q}\subseteq \Q^\a.
$$
Since $b\in H$, we get $b\in \Q^\a\cap H$. Since the sum
$\Q^\a+ H=\Q^\a\oplus H$ is direct, we conclude that $b=0$.
Thus, $\sum_{j\in J} m_j g_j=a\in \Q^\a_{\pi_0}$.
Since 
$J\subseteq \{1,2,\dots,k\}$,
we get
$$
\sum_{j\in J} m_j g_j\in
\grp{\{g_j:j\in J\}}\cap \Q^\a_{\pi_0}
\subseteq 
\grp{\{g_1,g_2,\dots,g_k\}}\cap \Q^\a_{\pi_0}
\subseteq \Zet[s^p_{n^p}]^\a
$$ by 
subitem~(ii) of item~(A) of Lemma \ref{iterative:lemma} applied with $s=s_{n^p}^p$.
Combining this with \eqref{eq:23},
we get $x\in y+ \Zet[s^p_{n^p}]^\a + \Zet[s^p_i]^\a$.
Since $i\le n_p$, $s_i^p$ divides $s_{n^p}^p$ by (8$_p$), so
$\Zet[s^p_{n^p}]^\a \subseteq \Zet[s^p_i]^\a$.
This gives
$x\in y+ \Zet[s^p_i]^\a \subseteq U_i^p+ \Zet[s^p_i]^\a =U_i^p$
by
(6$_p$).
\end{proof}

\begin{claim}
\label{claim:5}
$q\le p$.
\end{claim}
\begin{proof}
We need to check conditions (i$_q^p$)--(iv$_q^p$).
(i$_q^p$) follows from \eqref{eq:pi}.
Since $n^q=n^p$ by our definition, (ii$_q^p$) holds.
(iii$_q^p$) is proved in Claim \ref{claim:3}.
(iv$_q^p$) holds by the definition of $s_i^q$.
\end{proof}

The density of $D_{g+h}$ in $(\P,\le)$ follows from Claims
\ref{claim:1a}, \ref{claim:2} and
\ref{claim:5}.
\end{proof}

\section{Main theorem}
\label{Sec:7}

We shall need the following folklore lemma.

\begin{lemma}
\label{countable:generic:filter}
If $\mathscr{D}$ is an at most countable family of dense subsets of a non-empty poset 
$(\P,\le)$, then there exists an at most countable subset $\F$ of $\P$
such that $(\F,\le)$ is a linearly ordered set and 
$\F\cap D\not=\emptyset$ for every $D\in\mathscr{D}$.
\end{lemma}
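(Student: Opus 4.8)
The plan is to prove Lemma \ref{countable:generic:filter} by a straightforward enumeration-and-recursion argument, exploiting the density of each $D\in\mathscr{D}$ to build a decreasing chain in $(\P,\le)$ that meets every dense set. First I would dispose of the trivial case: if $\mathscr{D}$ is empty, pick any single element of $\P$ (which is non-empty by hypothesis) and let $\F$ be the singleton consisting of it; a one-element set is trivially linearly ordered and meets every member of the empty family. So assume from now on that $\mathscr{D}\not=\emptyset$ and enumerate it as $\mathscr{D}=\{D_n:n\in\N\}$, allowing repetitions if $\mathscr{D}$ is finite so that the enumeration has length $\omega$.

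Next I would construct, by recursion on $n\in\N$, a sequence $(p_n)_{n\in\N}$ of elements of $\P$ such that $p_n\in D_n$ and $p_{n+1}\le p_n$ for every $n\in\N$. To start, since $D_0$ is dense in $(\P,\le)$ and $\P\not=\emptyset$, fixing an arbitrary $r\in\P$ we may apply the definition of density to find $p_0\in D_0$ with $p_0\le r$. For the recursive step, suppose $p_n\in\P$ has been chosen; since $D_{n+1}$ is dense in $(\P,\le)$, there exists $p_{n+1}\in D_{n+1}$ with $p_{n+1}\le p_n$. This completes the recursion.

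Finally I would set $\F=\{p_n:n\in\N\}$ and verify the three required properties. The set $\F$ is at most countable, being the range of a sequence indexed by $\N$. It meets every $D\in\mathscr{D}$: given $D\in\mathscr{D}$, by the choice of enumeration there is some $n\in\N$ with $D=D_n$, and then $p_n\in\F\cap D_n=\F\cap D$, so $\F\cap D\not=\emptyset$. It remains to check that $(\F,\le)$ is linearly ordered. By construction $p_{n+1}\le p_n$ for all $n$, and since $\le$ is a partial order (in particular transitive), an easy induction shows $p_m\le p_n$ whenever $n\le m$. Hence for any two elements $p_i,p_j\in\F$, writing $n=\min\{i,j\}$ and $m=\max\{i,j\}$, we have $p_m\le p_n$, so the two elements are comparable; thus $\le$ restricted to $\F$ is a linear order.

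The argument is entirely routine and there is no real obstacle; the only points requiring minor care are handling the degenerate case $\mathscr{D}=\emptyset$ separately (since an empty enumeration would leave the recursion without a starting point), padding a finite family to an $\omega$-indexed enumeration so that the recursion runs through all of $\N$, and noting that $\F$ may be finite (when the constructed sequence is eventually constant) but is in any case at most countable, which is all that the statement claims. No appeal to the Axiom of Choice beyond countable dependent choice is needed, and the fact that $(\P,\le)$ is a genuine partial order — rather than merely a preorder — is what makes the comparability of any two members of $\F$ automatic once the chain condition $p_{n+1}\le p_n$ is in place.
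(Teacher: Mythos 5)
Your proof is correct and follows essentially the same route as the paper's: enumerate $\mathscr{D}$, recursively build a decreasing sequence $p_n$ with $p_n\in D_n$ using density, and take $\F$ to be its range. The minor extra care you take (the empty-family case, padding finite families, starting $p_0$ inside $D_0$) are harmless variations of the same argument.
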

\begin{proof}
Since the family $\mathscr{D}$ is at most countable, we can fix an enumeration $\mathscr{D}=\{D_n:n\in\N^+\}$ of elements of $\mathscr{D}$. Since $\P\not=\emptyset$, there exists $p_0\in\P$. By induction  on $n\in\N^+$, we can choose 
$p_n\in D_n$ such that $p_n\le p_{n-1}$; this is possible because 
$D_n$ is dense in $(\P,\le)$. Now $\F=\{p_n:n\in\N^+\}$ is the desired subset of $\P$.
\end{proof}

The next
theorem provides
a positive answer to a more general version of 
\cite[Question 13.1]{DS_SSGP}.

\begin{theorem}
\label{the:theorem}
Suppose that $\a \in\N^+$ and $G$ is a wide subgroup of $\Q^\a$.
Then for each at most countable abelian group $H$, the 
direct sum 
$K = G 
\oplus
H$
admits a metric $\ssgp$ 
topology.
 \end{theorem}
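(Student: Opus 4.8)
The plan is to build the topology $\mathscr{T}$ on $K=G\oplus H$ by prescribing a countable decreasing base of neighbourhoods of zero and then verifying the axioms of a topological group together with the $\ssgp$ property. Concretely, I would invoke Lemma~\ref{countable:generic:filter} applied to the poset $(\P,\le)$ from Definition~\ref{def:P} (built from this $G$, this $H$, and this $\a$) and to a carefully chosen countable family $\mathscr D$ of dense subsets of $(\P,\le)$. The family $\mathscr D$ should consist of: the sets $A_n$ for $n\in\N$ (dense and downward-closed by Lemma~\ref{trivial:dense:sets:i}), the sets $B_\pi$ for $\pi\in[\Prm]^{<\omega}$ (dense by Lemma~\ref{trivial:dense:sets:ii}), the sets $C_{g+h}$ for each nonzero $g+h$ with $g\in G$, $h\in H$ (dense by Lemma~\ref{trivial:dense:sets:iii}), and the sets $D_{g+h}$ for all $g\in G$, $h\in H$ (dense by Lemma~\ref{ssgp:density}, which is where wideness of $G$ is used). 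Since $H$ is at most countable and $G\le\Q^\a$ is at most countable, the set of relevant pairs $g+h$ is at most countable, so $\mathscr D$ is at most countable, and Lemma~\ref{countable:generic:filter} produces a linearly ordered $\F\subseteq\P$ meeting every member of $\mathscr D$.

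Next I would extract from $\F$ the neighbourhood base. For each $p\in\F$ and each $i\in n^p+1$ we have a set $U_i^p$; as $\F$ is linearly ordered, the compatibility conditions (iii$_q^p$) guarantee that these sets cohere, and because $\F$ meets every $A_n$ the indices $n^p$ are unbounded over $p\in\F$. I would define, for each $n\in\N$, a set $V_n=\bigcup\{U_n^p:p\in\F,\ n\le n^p\}\subseteq K$ (identifying $G\oplus H$ with its copy inside $\Q^\a\oplus H$ via $\Z^\a\subseteq G$). The structural conditions (4$_p$)--(7$_p$) then translate into exactly what is needed for $\{V_n:n\in\N\}$ to be a neighbourhood base at $0$ for a (Hausdorff, metrizable) group topology: each $V_n$ contains $0$ and is symmetric by (5$_p$), we have $V_{n+1}+V_{n+1}\subseteq V_n$ by (7$_p$), the chain is decreasing by Remark~\ref{subgroups:in:U_i^p}~(i), and meeting the sets $C_{g+h}$ forces $\bigcap_n V_n=\{0\}$, giving Hausdorffness; countability of the base yields metrizability by the Birkhoff--Kakutani theorem.

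For the $\ssgp$ property I would use the reformulation in Proposition~\ref{SSGP:reformulation}~(ii): it suffices to show $K=\bigcup_{k\in\N^+}V_n+\grp{\Cyc(V_n)}_k$ for every $n$. Fix $n$ and an arbitrary element $g+h\in K$; since $\F$ meets $D_{g+h}$, pick $q\in\F$ in $D_{g+h}$, and then — using $A_{n}$ to get some $q'\le q$ in $\F$ with $n^{q'}\ge n$, or simply arranging the enumeration so that the relevant $q$ already has $n^q\ge n$ — conclude $g+h\in\bigcup_{k}U_{n^q}^q+\grp{\Cyc(U_{n^q}^q)}_k$. Because $U_{n^q}^q\subseteq V_n$ (as $V_n$ is the union over the larger indices and the $U$'s cohere), and because $\Cyc$ is monotone so $\Cyc(U_{n^q}^q)\subseteq\Cyc(V_n)$, this gives $g+h\in\bigcup_k V_n+\grp{\Cyc(V_n)}_k$, as required.

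The main obstacle is the bookkeeping needed to make the previous paragraph airtight: I must ensure that for each fixed $n$ and each target $g+h$, the filter $\F$ contains an element $q$ that is simultaneously in $D_{g+h}$ \emph{and} has $n^q\ge n$, and that the set $V_n$ I attach to the index $n$ really contains $U_{n^q}^q$ for that $q$. This is handled by noting that $A_n\cap D_{g+h}$ is dense: $A_n$ is downward-closed and dense, $D_{g+h}$ is dense, so their intersection is dense by Lemma~\ref{downwardclosed:dense:sets}; including all these intersections in $\mathscr D$ (still a countable family) guarantees $\F$ meets each of them. Then for $q$ in $A_n\cap D_{g+h}$ we have $n\le n^q$ and the coherence of $\F$ gives $U_n^q$ well-defined with $U_{n^q}^q\subseteq U_n^q\subseteq V_n$; since $q\in D_{g+h}$ we are done. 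Once this indexing is set up correctly, the verification of the group topology axioms from (1$_p$)--(8$_p$) and the Hausdorff/metrizability claims are routine, and the $\ssgp$ property follows as sketched.
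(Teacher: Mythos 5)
Your proposal follows essentially the same route as the paper: the same poset $(\P,\le)$, the same countable family of dense sets (including the crucial intersections $A_n\cap D_{g+h}$ justified via Lemma~\ref{downwardclosed:dense:sets}), a linearly ordered $\F$ from Lemma~\ref{countable:generic:filter}, the neighbourhood base $V_n=\bigcup\{U_n^p:p\in\F,\ n\le n^p\}$, Hausdorffness from the sets $C_{g+h}$ together with coherence (iii$_q^p$) along the chain, and the $\ssgp$ property via Proposition~\ref{SSGP:reformulation}~(ii). This matches the paper's proof, so the proposal is correct.
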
 
\begin{proof}

Fix $m\in\N^+$ and let $G$ be a wide subgroup of $\Q^\a$.
Let $H$ be a countable abelian group. We work in the direct sum 
$\Q^\a\oplus H$ and consider its subgroup 
$K=G+H$. Clearly,
the sum $G+H=G\oplus H$ is direct. Therefore,
$K$ can also be considered as the direct product $G\times H$ of $G$ and $H$. This observation means that it suffices to find the required group topology $\mathscr{T}$ on the subgroup $K=G+H$ of the direct sum $\Q^\a\oplus H$.

Let $(\P,\le)$ be the poset from Definition \ref{def:P}
which uses our $m\in\N^+$, $G$ and $H$ as its parameters.

As a subgroup of $\Q^\a$,
$G$ is at most countable.
Since $H$ is at most countable as well, so is the 
sum
$K=G+ H$.
Therefore, the family 
\begin{equation}
\label{family:D}
\mathscr{D}=
\{C_x: x\in K\setminus\{0\}\}
\cup
\{A_n\cap D_x:n\in\N, x\in K\}
\end{equation}
of subsets of $\P$ is at most countable.

Lst us check that
all members of $\mathscr{D}$ are dense in $(\P,\le)$. 
By Lemma \ref{trivial:dense:sets:iii},
each $C_x$ for $x\in K \setminus\{0\}$ is dense in 
$(\P,\le)$.
Let $n\in\N$ and $x \in K$.
Since $A_n$ is dense and downward-closed in $(\P,\le)$
by Lemma \ref{trivial:dense:sets:i}
and 
$D_x$ is dense in $(\P,\le)$ by Lemma \ref{ssgp:density}, 
from Lemma \ref{downwardclosed:dense:sets} we conclude that
$A_n\cap D_x$ is dense in $(\P,\le)$. 

Apply Lemma \ref{countable:generic:filter} to find a countable
set $\F\subseteq \P$ such that $(\F,\le)$ is a linearly ordered set and 
$\F\cap D\not=\emptyset$ for every $D\in\mathscr{D}$.
For every $i\in\N$, define
\begin{equation}
\label{def:U_i}
U_i=\bigcup\{U_i^p:p\in\F\text{ and }i\le n^p\}.
\end{equation}

Our nearest goal is to show that the family 
\begin{equation}
\label{eq:fam:B}
\B = \{U_i:i\in\N\}
\end{equation}
is a neighbourhood base at $0$ 
of a Hausdorff group topology $\mathscr{T}$ on $K$.
The verification of this will be split into three claims.

\begin{claim}
\label{claim:6}
$\bigcap_{n\in\N} U_n = \{0\}.$
\end{claim}

\begin{proof}
Let $n\in\N$ be arbitrary.
Since $A_n\cap D_0\in\mathscr{D}$ by \eqref{family:D}, there exists 
$p\in A_n\cap D_0\cap\F$.
Therefore, $n\le n^p$ by the definition of $A_n$.
Now $0\in U_n^p$ by the condition (4$_p$) imposed on $\P$. 
Since $p\in \F$ and $n\le n^p$, it follows from \eqref{def:U_i} that
$U_n^p\subseteq U_n$. This shows that $0\in U_n$.
Since $n\in\N$ was chosen arbitrarily, we conclude that 
$0\in \bigcap_{n\in\N} U_n$.

To prove the reverse inclusion 
$\bigcap_{n\in\N} U_n\subseteq \{0\}$, 
we choose $x \in K \setminus\{0\}$ arbitrarily
and show that $x \notin \bigcap_{n\in\N}U_n$.
Since $C_x \in\mathscr{D}$ by \eqref{family:D},
our choice of $\F$ allows us to fix $p\in C_x \cap\F$.
Then
\begin{equation}
\label{eq:35}
x \in (\Q_{\pi^p}^\a \Oplus H)\setminus U^p_{n^p}
\end{equation} 
by the definition of $C_x$.

Assume that $x \in U_{n^p}$. Thus, by \eqref{def:U_i},
there exists $q \in \F$ such that $n^p\le n^q$ and $x \in U^q_{n^p}$. 
Suppose that $p \leq q$.
Then $U^p_{n^p} \cap (\Q_{\pi^q}^\a \Oplus H) = U^q_{n^p}$ 
by (iii$_p^q$).
Since $x \in U^q_{n^p}$, this implies
$x \in U^p_{n^p}$, in contradiction with \eqref{eq:35}.
Similarly, assume that $q \leq p$.
Then 
$U^q_{n^p} \cap (\Q_{\pi^p}^\a \Oplus H) = U^p_{n^p}$
by (iii$_q^p$), so
$x \notin U^q_{n^p}$ by \eqref{eq:35}, in contradiction with 
$x \in U^q_{n^p}$.
This contradiction shows that neither $p\le q$ nor $q \leq p$ holds. 
Since $p,q\in \F$, this contradicts the fact that $\F$ is linearly ordered by $\le$.
This contradiction shows that our assumption that $x \in U_{n^p}$ is false, 
so $x \notin\bigcap_{n\in\N} U_n$.
\end{proof}

\begin{claim}
\label{claim:7}
$-U_{i} = U_{i}$ and $U_{i+1} + U_{i+1} \subseteq U_{i}$ for every $i \in \N$. 
\end{claim}
\begin{proof}
Let 
$x \in U_{i}$. 
By \eqref{def:U_i}, this means 
that $x \in U^p_{i}$ for some $p \in \F$ satisfying $i\le n^p$. By (5$_p$), we know that $-U^p_{i} = U^p_{i}$, so $-x \in U^p_{i}$. This proves the inclusion $-U_{i} \subseteq U_{i}.$ The reverse inclusion is proved analogously.

Finally, consider $x,y \in U_{i+1}$.
By \eqref{def:U_i}, there exist
$p,q \in \F$ such that $x \in U^p_{i+1}$, 
$y \in U^q_{i+1}$, $i+1\le n^p$ and $i+1\le n^q$. Since $(\F,\le)$ is a linear order, we may assume,  without loss of generality, that $q \leq p$. 
Since $i+1\le n^p$, 
(iii$_q^p$) implies
$x \in U_{i+1}^p\subseteq U_{i+1}^q$.
Therefore, $x,y \in U^q_{i+1}$. Since 
$i+1\le n^q$, we have
$x+y \in U^q_{i+1} + U^q_{i+1} \subseteq U^q_{i}$ by (7$_q$).
Then $x+y \in U_{i}$ by \eqref{def:U_i}. This shows that $U_{i+1} + U_{i+1} \subseteq U_{i}$, as desired.
\end{proof}

\begin{claim}
\label{claim:9}
The family 
$\B$ as in \eqref{eq:fam:B}
is a neighbourhood base at $0$ 
of some Hausdorff group topology $\mathscr{T}$ on 
$K$.
\end{claim}
\begin{proof}
It easily follows from Claims \ref{claim:6} and \ref{claim:7}
that $U_m\subseteq U_n$ whenever $n,m \in \N$ and $n \leq m$. 
Combined with \eqref{eq:fam:B}, this implies that $\B$ is a filter base. By \eqref{eq:fam:B} and Claim \ref{claim:7}, 
$\B$ has the following two properties.

\begin{itemize}
\item For every $U \in \B$ there exists $V \in \B$ such that $V+V \subseteq U$; and
\item For every $U \in \B$ there exists $V \in \B$ such that $-V \subseteq U$.
\end{itemize}

By \cite[Theorem 3.1.5]{1}, the family 
$$
\mathscr{T} = \{ O \subseteq K : \forall\ a \in O\ \exists\ U \in \B\ (a + U \subseteq O) \}
$$ 
is a topology on $K$ making it into a topological group such that the family $\B$ is a neighbourhood base at $0$ comprised of $\mathscr{T}$-neighbourhoods of $0$. 
It follows from 
Claim \ref{claim:6} and \cite[Theorem 4.1.1]{1} that
$\mathscr{T}$
is Hausdorff. 
\end{proof}

\begin{claim}
The topological group $(K,\mathscr{T})$ has the small subgroup generating property. 
\end{claim}
\begin{proof}
We are going to check that $(K,\mathscr{T})$ has the property (ii)
from Proposition \ref{SSGP:reformulation}.

Let 
$W$ be a neighbourhood of $0$ in $(K,\mathscr{T})$.
By Claim \ref{claim:9},
there exists $i\in\N$ such that $U_i\subseteq W$.

Take an arbitrary $x \in K$. 
Since $A_i\cap D_x \in\mathscr{D}$, there exists $q\in \F\cap A_i\cap D_x$. Since $q\in D_x$, we can find
$k\in\N$ such that 
\begin{equation}
\label{eq:39}
x \in U^q_{n_q} + \grp{\Cyc(U^q_{n_q})}_k.
\end{equation}
Since $q\in A_i$, we have
$i\le n^q$,
so $U_{n^q}^q\subseteq U_i^q$
by Remark \ref{subgroups:in:U_i^p}~(i).
Since $q\in\F$, from \eqref{def:U_i} we get
$U_i^q\subseteq U_i$,
so
$U_{n^q}^q\subseteq U_i\subseteq W$, which implies
$\Cyc(U_{n^q}^q)\subseteq \Cyc(W)$.
Therefore, 
\begin{equation}
\label{eq:40}
U^q_{n_q} + \grp{\Cyc(U^q_{n_q})}_k \subseteq W + \grp{\Cyc(W)}_k.
\end{equation}
Combining \eqref{eq:39} and \eqref{eq:40},
we conclude that
$x \in 
\bigcup_{k\in\N^+} W + \grp{\Cyc(W)}_k.$ 
Since this inclusion holds for every $x\in K$, we get
$K \subseteq \bigcup_{k\in\N^+} W + \grp{\Cyc(W)}_k$. The converse inclusion is clear.
\end{proof}

Since $(K,\mathscr{T})$ is Hausdorff and has a countable base at $0$ by Claim \ref{claim:9}, it is metrizable.
\end{proof}

\end{document}